\documentclass[11pt,reqno]{amsart}
\usepackage[T1]{fontenc}
\usepackage[utf8]{inputenc}
\usepackage{lmodern}
\usepackage{amsmath,amssymb,mathtools,mathrsfs}
\usepackage[a4paper,margin=27mm]{geometry}
\usepackage{microtype}
\usepackage{enumitem}
\usepackage[hidelinks]{hyperref}
\hypersetup{pdftitle={Finite power-image families and automorphism actions in linear groups},pdfauthor={Zhexin Jiang},pdfsubject={Research draft: power-map images and Kourovka Problem 21.40}}
\newtheoremstyle{upright}{6pt}{6pt}{\normalfont}{}{\bfseries}{.}{.5em}{}
\theoremstyle{upright}
\newtheorem{theorem}{Theorem}[section]
\newtheorem{lemma}[theorem]{Lemma}
\newtheorem{definition}[theorem]{Definition}
\newtheorem{proposition}[theorem]{Proposition}
\newtheorem{corollary}[theorem]{Corollary}
\newtheorem{remark}[theorem]{Remark}
\newtheorem{example}[theorem]{Example}
\newtheorem{maintheorem}{Theorem}

\DeclareMathOperator{\Aut}{Aut}
\DeclareMathOperator{\GL}{GL}
\DeclareMathOperator{\UT}{UT}

\DeclareMathOperator{\Lie}{Lie}
\DeclareMathOperator{\lcm}{lcm}
\DeclareMathOperator{\cl}{class}
\newcommand{\Q}{\mathbb Q}
\newcommand{\Z}{\mathbb Z}
\newcommand{\C}{\mathbb C}
\newcommand{\Pfam}{\mathscr P}
\newcommand{\Rf}{\mathrm R_{\!f}}
\newcommand{\zar}{\mathrm{Zar}}
\newcommand{\Qbar}{\overline{\mathbb Q}}
\setlist[enumerate]{label=\textup{(\roman*)},leftmargin=2em,itemsep=3pt,topsep=5pt}
\numberwithin{equation}{section}
\title[Finite power-image families in linear groups]{Finite power-image families and automorphism actions in linear groups}
\author{Zhexin Jiang}

\address{
	Department of Mathematics,
	School of Science and Engineering,
	The Chinese University of Hong Kong, Shenzhen,
	GuangDong, China
}

\email{zhexinjiang@link.cuhk.edu.cn}
\date{ 15 October 2026}
\subjclass[2020]{
	Primary 20H20;
	Secondary 20E34, 20E36, 20F18, 20F40
}
\keywords{Linear group, automorphism orbit, power map, unipotent group, nilpotent action}
\begin{document}
\begin{abstract}
Let $k/\Q$ be a finitely generated field extension. We characterize the subgroups $G\leq\GL_n(k)$ for which the
power images $P_m(G)=\{g^m:g\in G\}$ form a finite family:this holds precisely when $G=N\rtimes F$, where $F$ is finite
and $N$ is a uniquely divisible subgroup consisting of unipotent matrices.
The subgroup $N$ is canonical and fully invariant.
For $e=\exp(G/N)$, we obtain
$P_m(G)=P_{\gcd(m,e)}(G)$, exactly $\tau(e)$ distinct power
images, and least positive period $e$, where $\tau(e)$ is the
number of positive divisors of $e$.
Over $\Q$, the group coincides with the full group of rational
points of its Zariski closure.
Since finite automorphism-orbit count implies finiteness of
the power-image family, an affirmative answer to Problem~21.40
of the Kourovka Notebook\cite{Kourovka} follows as a corollary.
We further prove that a rational linear group $G$ admits a
decomposition $G\cong(\Q^d,+)\rtimes F$, with $d\geq0$ finite
and $F$ finite, whenever a virtually nilpotent subgroup of
$\operatorname{Aut}(G)$ has finitely many orbits on $G$.
The latter argument uses nonconstant invariant rational
functions arising from nonzero Lie brackets to obstruct
dense orbits. A metabelian action with five orbits on the rational Heisenberg
group shows that solubility of the acting group is insufficient.

\end{abstract}
\maketitle

\section{Introduction And Main Results}
The structure of groups with finitely many automorphism orbits has been
studied under a variety of algebraic hypotheses. For a group $G$ and a
subgroup $A\leq\operatorname{Aut}(G)$, consider the natural left action
$\alpha\cdot g=\alpha(g)$ and write
\[
\omega_A(G)=|A\backslash G|,
\qquad
\omega(G)=\omega_{\operatorname{Aut}(G)}(G).
\]
Thus $A\backslash G$ is the set of $A$-orbits on $G$.
All automorphisms and endomorphisms in this paper are abstract group maps
unless explicitly described as algebraic or linear.

A motivating question is Problem~21.40 of the twenty-first edition of
the Kourovka Notebook, proposed by A.~Dantas and E.~de Melo
\cite{Kourovka}: if $G\leq\operatorname{GL}_n(\mathbb Q)$ and
$\omega(G)<\infty$, must $G$ be virtually soluble?

Two recent preprints present proposed affirmative answers to
Problem~21.40. Jayadevan~\cite[Theorem~1.1]{Jayadevan2026} proved if
$G\leq\operatorname{GL}_n(\mathbb Q)$ and $\omega(G)<\infty$, then $G$
has a normal torsion-free nilpotent subgroup $U$ satisfying
$$
\operatorname{class}(U)\leq n-1,
\qquad
[G:U]\leq(2n+1)^d\leq(2n+1)^{n^2},
\qquad
d=\dim_{\mathbb Q}\operatorname{span}_{\mathbb Q}G.
$$
A closely related AI-generated argument for the same bound is
presented in \cite[\S3.39]{CKU2026}. Both arguments use bounded-degree roots of
algebraic numbers to constrain the spectrum and a trace-radical
construction to obtain the finite-index nilpotent subgroup; neither
requires $G$ to be finitely generated. Jayadevan also treats finite trace
sets and obtains a number-field version by restriction of scalars
\cite[Theorem~1.2 and Corollary~4.1]{Jayadevan2026}. In this paper, we give a structural characterization of subgroups
$G\leq\operatorname{GL}_n(k)$, where $k/\mathbb Q$ is finitely generated,
under the strictly weaker assumption that their power-image family is finite.
For $k=\mathbb Q$, an affirmative answer to Problem~21.40 follows as a corollary.
We further prove virtual abelianness for rational linear groups admitting
a virtually nilpotent subgroup of automorphisms with finitely many orbits.
This refinement is not automatic, as the rational Heisenberg group shows,
and its proof connects nonzero Lie brackets with invariant rational functions.

Earlier structural results begin with solubility or virtual nilpotence.
Bastos--Dantas--de Melo~\cite{BDM2020} obtain a radicable nilpotent kernel
with finite complement for soluble groups of finite rank with finitely
many automorphism orbits. Their later work~\cite{BDM2021} starts instead
with virtual nilpotence and obtains a torsion-free radicable nilpotent
characteristic kernel and a torsion complement.
Dantas--de Melo--Kato~\cite{DMK} study mixed-order soluble groups of finite
rank with four or five automorphism orbits.
Dantas--de Sousa~\cite{DS} prove finiteness for finitely generated locally
graded groups with finitely many automorphism orbits and also study
finite-orbit phenomena in Mal'cev completions of free nilpotent groups.
Accordingly, the finitely generated automorphism-orbit consequence of
our structural theorem is not claimed as a separate new result.

For prescribed acting groups, Jabara~\cite{Jabara2007} proves that $G$ is
finite when $A\leq\operatorname{Aut}(G)$ is finitely generated abelian
and $\omega_A(G)<\infty$. Without finite generation of $A$,
Jabara~\cite{Jabara2010} proves virtual abelianness for an arbitrary
abstract group $G$ when $A$ is abelian and has finitely many orbits.
These results provide the context for our second main theorem, which
restricts $G$ to rational linear groups but permits virtually nilpotent
acting subgroups.

The power-image questions of Hrushovski--Kropholler--Lubotzky--Shalev
\cite{HKLS} and Lubotzky--Meiri~\cite{LM} concern powers inside finitely
generated groups. Our setting does not assume finite generation of the
group, and the finiteness condition studied below need not pass to
finitely generated subgroups.

\subsection*{Notations and Main theorems}
For a group $G$ and a positive integer $m$, let
$$
P_m(G)=\{g^m:g\in G\},\qquad\Pfam(G)=\{P_m(G):m\geq1\}.
$$
The notation $P_m(G)$ denotes the \emph{image set} of the power map.
For every endomorphism $\alpha$ of $G$, the identity
$\alpha(g^m)=\alpha(g)^m$ gives $\alpha(P_m(G))\subseteq P_m(G)$. Thus $P_m(G)$ is a union of automorphism orbits and contains the identity.
Consequently,
\begin{equation}\label{eq:orbit-bound}
\omega(G)<\infty\quad\Longrightarrow\quad
|\Pfam(G)|\leq 2^{\omega(G)-1}<\infty.
\end{equation}
Indeed, the identity forms one orbit, and each of the other orbits can be
included or omitted. Different exponents may yield the same set.

\begin{definition}
	A group is \emph{divisible} (or \emph{radicable}) if every equation
	$x^m=g$, $m\geq1$, has a solution. It is \emph{uniquely divisible}
	if that solution is always unique. A subgroup is \emph{fully invariant} if every
	endomorphism of the ambient group maps it into itself. 
\end{definition}
The finite residual
of $G$ is
$$
\Rf(G)=\bigcap\{L\trianglelefteq G:[G:L]<\infty\}.
$$
For a finite group $B$, write $\exp(B)$ for its exponent, with
$\exp(1)=1$, and let $\tau(e)$ be the number of positive divisors of $e$.

\begin{maintheorem}\label{thm:main}
	Let $k/\Q$ be a finitely generated field extension and let
	$G\leq\GL_n(k)$. The following conditions are equivalent:
	\begin{enumerate}
		\item $\Pfam(G)$ is finite;
		\item $G=N\rtimes F$, where $F$ is finite and $N$ is a uniquely divisible
		subgroup consisting of unipotent matrices.
	\end{enumerate}
	When these conditions hold, let $H$ be the Zariski closure of $G$ in
	$\GL_n(\overline{k})$. Then $H^\circ$ is unipotent and
	\begin{equation}\label{eq:canonical}
	N=G\cap H^\circ
	=\{g\in G:g\text{ is unipotent}\}
	=\Rf(G).
	\end{equation}
	The subgroup $N$ is fully invariant, torsion-free and nilpotent, with nilpotency class
	$\cl(N)\leq n-1$. It is also the largest divisible subgroup of $G$.
	If $e=\exp(G/N)$, then
	\begin{equation}\label{eq:profile}
	P_e(G)=N,\qquad P_m(G)=P_{\gcd(m,e)}(G),\qquad
	|\Pfam(G)|=\tau(e).
	\end{equation}
	The least positive period of the sequence $(P_m(G))_{m\geq1}$ is $e$.
	The trivial group has nilpotency class zero.
\end{maintheorem}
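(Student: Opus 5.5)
The main direction is (i)$\Rightarrow$(ii). Assume $\Pfam(G)$ is finite. The sets $P_{m!}(G)$ then stabilize, so there is an $M$ with $P_{M}(G)=P_{Mk}(G)$ for all $k\geq1$. Put $S=P_M(G)$. Every $s\in S$ is then an $r$-th power in $G$ for every $r$, since $s=g^{Mr}=(g^M)^r$.

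The first step is spectral. Take $g\in G$ and let $\lambda$ be an eigenvalue of $g^M$. Then $\lambda$ has an $r$-th root of degree at most $n$ over $k$ for every $r$. Because $k$ is finitely generated over $\Q$, the group $\overline{k}^{\times}$ contains only finitely many roots of unity of bounded degree over $k$. Moreover, the multiplicative group of elements of bounded degree, modulo torsion, has no nontrivial infinitely divisible elements; this follows from a Kummer/Northcott-type argument using heights and specialization to a number field. Hence $\lambda$ is a root of unity of order bounded in terms of $n$ and $k$. Replacing $M$ by a multiple $M'$ (still with $P_{M'}=P_{M'k}$), every element of $S=P_{M'}(G)$ is unipotent. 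Consequently every $g\in G$ has $g^{M'}$ unipotent, so $g$ is virtually unipotent with uniformly bounded order of its semisimple part.

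Next, let $H$ be the Zariski closure of $G$. The set $\{h: h^{M'}\text{ unipotent}\}$ is closed and contains $G$, hence contains $H$. A connected group in which a power of every element is unipotent is unipotent, because a torus would contain elements of infinite order that are not unipotent. So $H^\circ$ is unipotent and $N:=G\cap H^\circ$ has finite index. Unipotent groups in characteristic $0$ are torsion-free and nilpotent, and by Kolchin we get $\cl(N)\leq n-1$ after triangularizing. Unipotent matrices have unique roots among unipotent matrices (via $\log/\exp$), which gives unique divisibility once divisibility is known.

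The key point is divisibility of $N$, which I expect to be the main obstacle. I plan to show $N=P_e(G)$ with $e=\exp(G/N)$. On one side, $P_e(G)\subseteq N$ trivially. For the other side, take $u\in N$. If $u=g^{e'}$ with $e'$ a large multiple of $e$, then $u\in S$. To prove $N\subseteq S$ I would argue by contradiction: pick $u\in N\setminus S$ and consider the sequence $u,u^2,\dots$. Finiteness of $\Pfam$ combined with the rigidity of unipotent roots should force $u$'s root $u^{1/r}=\exp(\log u/r)$ to lie in $G$ for all $r$. First show $S\cap N$ generates a Zariski-dense divisible subgroup. Then show the divisible hull equals $N$ using $\Rf(G)\supseteq$ divisible elements and $N/\Rf$ being a finitely generated-like torsion-free group on which $P_m$-stabilization fails unless trivial. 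The worry is that this step needs a careful argument: a non-divisible unipotent subgroup such as $\Z$ yields infinitely many distinct $P_m$.

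The splitting $G=N\rtimes F$ comes from $H=H^\circ\rtimes$ (finite reductive) in characteristic $0$ together with divisibility. The cohomology $H^1,H^2$ of finite groups with uniquely divisible nilpotent coefficients vanishes, by induction on the central series of $N$. This gives a complement $F$.

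Then (ii)$\Rightarrow$(i) and the profile follow by computation. For $g=vf$, $g^m$ has $F$-part $f^m$, and unique divisibility of $N$ together with coprimality lets one solve for roots. This gives $P_m=P_{\gcd(m,e)}$; note that $G/N\cong F$ has exponent $e$, and distinct divisors of $e$ give distinct images by looking at images in $F$. This yields the count $\tau(e)$ and the period $e$. The equalities in \eqref{eq:canonical} follow from $N$ being divisible (so $N\subseteq\Rf$), $F$ finite (so $\Rf\subseteq N$), and $G\cap H^\circ$ being exactly the unipotent elements, since $F$ has no unipotent nontrivial elements. Full invariance holds because endomorphisms preserve divisible subgroups and $N$ is the largest one.
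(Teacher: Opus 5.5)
Your reduction to a unipotent $H^\circ$ is sound, but it differs from the paper's route. You stabilize the chain $P_{m!}(G)$ globally and invoke heights. The paper instead shows that an eigenvalue $\lambda$ that is not a root of unity makes the sets $P_p(G)$ pairwise distinct for infinitely many primes $p$, using that $\lambda$ is a $p$-th power in $k(\lambda)$ for only finitely many $p$ and that $X^p-\lambda$ is then irreducible. The paper's argument also covers transcendental $\lambda$, which your ``specialization'' remark leaves unproved.

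The genuine gap is the step you flag yourself: divisibility of $N=G\cap H^\circ$. Your plan is never carried out, and its ingredients are unjustified:
\begin{enumerate}
\item a dense divisible subgroup generated by $S\cap N$ (a subgroup generated by infinitely divisible elements is not obviously divisible);
\item a divisible hull;
\item $N/\Rf(G)$ being ``finitely generated-like'' ($N$ need not be finitely generated).
\end{enumerate}

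The missing idea is a cancellation that your own setup already supports. Take $u\in N$ and any $r$ with $M'\mid r$. Then $u^{M'}\in P_{M'}(G)=P_{M'r}(G)$, so $u^{M'}=(x^r)^{M'}$ for some $x\in G$. Because $M'\mid r$, $x^r$ is unipotent. Applying $\log$ gives $M'\log u=M'\log x^r$, hence $u=x^r$. With $r=M'q$ this gives $u=(x^{M'})^q$ where $x^{M'}\in N$. So $N$ is divisible and $N=P_{M'}(G)$. This is exactly the paper's Lemma~\ref{lem:stable}.

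The power-image computation, on which (ii)$\Rightarrow$(i) rests, is also not a proof as written. Writing $g=vf$ does not let you extract roots, because $v$ and $f$ need not commute, and ``coprimality'' alone does not produce an $m$-th root of $g^d$. What you need is the commuting decomposition from Proposition~\ref{prop:profile}. Let $u\in N$ be the unique $e$-th root of $g^e$. Then $gug^{-1}$ is another such root, so $g$ commutes with $u$, and $s=gu^{-1}$ satisfies $g=su=us$ with $s^e=1$. Since $C_N(s)$ is uniquely divisible, $P_m(G)=\bigcup_s s^mC_N(s)$ over torsion elements $s$. This yields $P_e(G)=N$ and periodicity. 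It also yields $P_m=P_{\gcd(m,e)}$, via the bijection $s\mapsto s^a$ with $a$ prime to $e$.

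Similarly, ``$F$ has no nontrivial unipotent elements'' does not exclude a unipotent element $vf$ with $f\ne1$. For that you need that unipotent elements of $H$ lie in $H^\circ$, or the decomposition above.

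Finally, an algebraic complement in $H=H^\circ\rtimes K$ need not lie in $G$. Only your cohomological induction (the paper's Lemma~\ref{lem:split}) actually produces $F$.
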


The assumption on the field is relevant to the forward implication.
The reverse implication and the power-image formula follow from a more
general abstract observation about finite extensions of uniquely divisible
groups, proved in Proposition~\ref{prop:profile}.

\begin{corollary}\label{thm:rational}
	Let $G\leq\GL_n(\Q)$ and suppose that $\Pfam(G)$ is finite.
	With $H$ and $N$ as in Theorem~\ref{thm:main}, regarded as a
	$\Q$-defined algebraic group and its canonical subgroup, respectively,
	\[
	N=H^\circ(\Q),\qquad G=H(\Q).
	\]
\end{corollary}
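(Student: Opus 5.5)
The plan is to use Theorem~\ref{thm:main} with $k=\Q$. It gives $G=N\rtimes F$ with $F$ finite, and $N=G\cap H^\circ$ consisting of unipotent matrices, uniquely divisible, with $H^\circ$ unipotent. Since $G\leq\GL_n(\Q)$, the Zariski closure $H$ is defined over $\Q$, and so is $H^\circ$. Note that $G\subseteq H(\Q)$ trivially, so the content of the corollary is the reverse inclusions.

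First I would prove $N=H^\circ(\Q)$. Let $U=H^\circ$, a unipotent $\Q$-group. In characteristic zero the exponential and logarithm are mutually inverse polynomial bijections between $\Lie(U)$ and $U$, both defined over $\Q$. Hence $U(\Q)=\exp(\Lie(U)(\Q))$ and $\log N\subseteq\Lie(U)(\Q)$.

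Next I would show that $\log N$ is a $\Q$-subspace. Unique divisibility gives $\exp(q\log x)\in N$ for all rational $q$, so $\log N$ is closed under rational scaling. By the Baker--Campbell--Hausdorff formula, which is a finite sum since the Lie algebra is nilpotent and has rational coefficients, together with the Lazard/Mal'cev correspondence for uniquely divisible nilpotent groups, $\log N$ is also closed under addition and brackets. So $\log N$ is a rational Lie subalgebra $\mathfrak n$, and $N=\exp(\mathfrak n)$.

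The Zariski closure of $N$ is then $\exp(\mathfrak n\otimes\overline{\Q})$, a connected unipotent group, because the exponential is a polynomial isomorphism and the closure of a $\Q$-subspace is its $\overline{\Q}$-span. Since $[G:N]<\infty$, the closure $\overline N$ has finite index in $H$ and is connected. Therefore $\overline N=H^\circ$, giving $\Lie(U)=\mathfrak n\otimes\overline{\Q}$ and $\Lie(U)(\Q)=\mathfrak n$. Hence $U(\Q)=\exp\mathfrak n=N$. I expect this step to be the main point: the identification of $\Lie(U)(\Q)$ with $\mathfrak n$ uses that a $\Q$-structure's rational points of $\mathfrak n\otimes\overline{\Q}$ are exactly $\mathfrak n$. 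That is a linear-algebra fact, provided the $\Q$-structure on $\Lie(U)$ coming from $H$ agrees with the one coming from $\mathfrak n$. Both are the $\Q$-structure inherited from $\mathfrak{gl}_n(\Q)$, so they agree.

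Finally I would prove $G=H(\Q)$. Since $H=\overline G=\bigcup_{f\in F}f\,\overline N=\bigcup_{f\in F} fH^\circ$, every component of $H$ contains a point of $G$. Take $h\in H(\Q)$; then $h\in fH^\circ$ for some $f\in F\subseteq G$. It follows that $f^{-1}h\in H^\circ\cap\GL_n(\Q)=H^\circ(\Q)=N$. Hence $h\in fN\subseteq G$, completing the proof.
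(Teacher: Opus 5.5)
Your argument is correct, but it reaches $N=H^\circ(\Q)$ by a different route from the paper.

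\emph{The paper's route.} It quotes a Mal'cev completion lemma from Baues--Grunewald: if $\Gamma\leq U(\Q)$ is finitely generated and Zariski dense, then every $x\in U(\Q)$ has a power in $\Gamma$. It adds an auxiliary lemma producing a finitely generated Zariski-dense $\Gamma\leq N$. Unique divisibility of $N$ and injectivity of power maps are then used only at the last step, to pass from $x^m\in N$ to $x\in N$.

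\emph{Your route.} You use unique divisibility up front to show that $\mathfrak n=\log N$ is a $\Q$-Lie subalgebra, via the inverse Baker--Campbell--Hausdorff (Lazard/Mal'cev) formulas. After that everything is linear algebra:
\begin{itemize}
\item $\exp$ is an isomorphism of varieties from nilpotent onto unipotent matrices, so $\overline N=\exp(\mathfrak n\otimes\Qbar)$;
\item this closure is connected and of finite index in $H$, hence equals $H^\circ$;
\item the rational points of $\mathfrak n\otimes\Qbar$ inside $\mathfrak{gl}_n$ are exactly $\mathfrak n$.
\end{itemize}
One small detail should be stated: $\mathfrak n\otimes\Qbar$ consists of nilpotent matrices. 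This holds because $\log N$ lies in a conjugate of the strictly upper triangular matrices (as $H^\circ$ is unipotent), or alternatively by Engel's theorem. Without it, $\exp$ is not a polynomial isomorphism on the span.

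\emph{Comparison.} The substantive input in your version is closure of $\log N$ under addition, which is where the Mal'cev theory sits; in the paper that content is hidden inside the cited completion lemma. Your version has several advantages:
\begin{itemize}
\item it avoids the finitely-generated-dense-subgroup lemma;
\item it gives directly the identification $\Lie(H^\circ)(\Q)=\log N$ used later in the proof of Theorem~\ref{thm:abelian-action};
\item it makes transparent why the equality fails over larger fields, as in Remark~\ref{rem:limits}: there $\log N$ is still only a $\Q$-subspace, and the $k$-points of its $\overline{k}$-span are $k\cdot\log N$.
\end{itemize}
The paper's version needs nothing about $\log N$ being a subspace beyond what the cited lemma supplies.

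Your final step $G=H(\Q)$ is the same as the paper's.
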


\begin{corollary}\label{cor:kourovka}
	Let $G\leq\GL_n(k)$, where $k/\Q$ is finitely generated.
	If $\omega(G)<\infty$, all conclusions of Theorem~\ref{thm:main} hold,
	and $\omega(N)\leq\omega(G)$. In particular, $G$ is virtually torsion-free
	nilpotent, hence virtually soluble and amenable as a discrete group.
	For $k=\Q$, Theorem~\ref{thm:rational} also applies.
\end{corollary}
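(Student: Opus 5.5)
We have to prove Corollary~\ref{cor:kourovka}, and the plan is to reduce it to Theorem~\ref{thm:main} through the orbit bound \eqref{eq:orbit-bound}. Suppose $\omega(G)<\infty$. Then $\Pfam(G)$ is finite, since each $P_m(G)$ is a union of $\Aut(G)$-orbits that contains the identity orbit, so $|\Pfam(G)|\le 2^{\omega(G)-1}$. Theorem~\ref{thm:main}(i) therefore holds, and with it all the listed conclusions: the decomposition $G=N\rtimes F$, the descriptions \eqref{eq:canonical} of $N$, full invariance, and the power profile \eqref{eq:profile}. When $k=\Q$, the same finiteness of $\Pfam(G)$ is exactly the hypothesis of Corollary~\ref{thm:rational}, so that result also applies.

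For $\omega(N)\le\omega(G)$, the key input is that $N$ is fully invariant, in particular characteristic. Hence every $\alpha\in\Aut(G)$ restricts to an automorphism $\alpha|_N\in\Aut(N)$, and restriction gives a homomorphism $\Aut(G)\to\Aut(N)$. The $\Aut(G)$-orbits on $G$ that meet $N$ lie entirely inside $N$, and each $\Aut(N)$-orbit on $N$ is a union of such orbits. The number of $\Aut(N)$-orbits on $N$ is therefore at most the number of $\Aut(G)$-orbits contained in $N$, which is at most $\omega(G)$. I will also note that $N=P_e(G)$ is itself a union of $\Aut(G)$-orbits, which gives a second route to the same containment.

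For the final sentence, Theorem~\ref{thm:main} gives that $N$ is torsion-free nilpotent of finite index $|F|$ in $G$, so $G$ is virtually torsion-free nilpotent. Nilpotent groups are soluble, so $G$ is virtually soluble. For amenability, nilpotent groups are amenable as discrete groups, finite groups are amenable, and extensions of amenable groups by amenable groups are amenable; so $G=N\rtimes F$ is amenable.

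I do not expect any step to be a real obstacle, since everything is routed through Theorem~\ref{thm:main}. The only point that needs care is the orbit comparison: it relies on $N$ being invariant under all of $\Aut(G)$, and this is exactly the fully invariant conclusion of the theorem.
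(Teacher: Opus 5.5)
Your proposal is correct and follows essentially the same route as the paper. The orbit bound \eqref{eq:orbit-bound} gives finiteness of $\Pfam(G)$, Theorem~\ref{thm:main} then supplies the structure, and since $N$ is characteristic the restrictions of $\Aut(G)$ lie in $\Aut(N)$, giving $\omega(N)\leq\omega(G)$; amenability follows because $G$ is nilpotent-by-finite. You also state explicitly why each $\Aut(N)$-orbit is a union of restricted $\Aut(G)$-orbits, which the paper leaves implicit.
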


Virtual nilpotence cannot in general be strengthened to virtual
abelianness: $\UT_3(\Q)$ is a counterexample, as recalled in
Section~\ref{sec:examples}. A restriction on the acting automorphism group gives the following conclusion.

\begin{maintheorem}\label{thm:abelian-action}
	Let $G\leq\GL_n(\Q)$. Suppose that there is a virtually nilpotent subgroup
	$A\leq\Aut(G)$ such that $\omega_A(G)<\infty$.
	Then the canonical subgroup $N$ in Theorem~\ref{thm:main} is abelian.
	In particular,
	\[
	G\cong(\Q^d,+)\rtimes F
	\]
	for some finite integer $d\geq0$ and some finite group $F$.
	The same abstract conclusion holds when $G$ is linear over a number field.
	Neither $G$ nor $A$ is assumed to be finitely generated.
\end{maintheorem}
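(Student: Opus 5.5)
Since $\omega_A(G)<\infty$ implies $\omega(G)<\infty$, Corollary~\ref{cor:kourovka} and Theorem~\ref{thm:main} give $G=N\rtimes F$, with $N=\Rf(G)$ fully invariant, uniquely divisible, torsion-free nilpotent and consisting of unipotent matrices. It therefore suffices to show that $N$ is abelian. Once this is known, $N$ is a uniquely divisible torsion-free abelian group inside the unipotent $\Q$-group $H^\circ$, so $N\cong(\Q^d,+)$ with $d\le\dim H^\circ$, and the stated decomposition follows. For a number field $K$, restriction of scalars embeds $G$ into $\GL_{n[K:\Q]}(\Q)$, and $A$ still acts on the same abstract group, so this case reduces to the rational one.

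To transfer the action to $N$: because $N$ is characteristic, $A$ restricts to a group $\bar A\le\Aut(N)$, which is again virtually nilpotent. Each $G$-orbit meeting $N$ lies in $N$, so $\omega_{\bar A}(N)<\infty$. By Mal'cev theory, $N=\exp(\mathfrak n)$ for a finite-dimensional nilpotent $\Q$-Lie algebra $\mathfrak n=\log N$, and every abstract automorphism of the uniquely divisible nilpotent group $N$ is induced by a Lie algebra automorphism of $\mathfrak n$. This gives $\bar A\hookrightarrow\Aut(\mathfrak n)\le\GL(\mathfrak n)$, acting linearly on $\mathfrak n\cong\Q^m$ with finitely many orbits.

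Suppose, for contradiction, that $\mathfrak n$ is non-abelian. Let $B$ be the Zariski closure of $\bar A$ in $\GL(\mathfrak n)$. Then $B^\circ$ is a connected nilpotent algebraic group (the closure of a finite-index nilpotent subgroup is nilpotent), and finitely many $\bar A$-orbits on $\mathfrak n(\Q)$ force $B^\circ$ to have a dense orbit on $\mathfrak n$. Consider $\mathfrak z=\mathfrak n^{(c)}$, the last nonzero term of the lower central series; it is central and $B$-stable, and $c\ge 2$ because $\mathfrak n$ is non-abelian. The aim is to produce a nonconstant $B^\circ$-invariant rational function on $\mathfrak n$, which contradicts density. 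My first attempt is as follows. Since $B^\circ$ is connected nilpotent, $B^\circ=T\times U$ with $T$ a torus. Take a nonzero bracket $[x,y]\in\mathfrak z$ with $x\in\mathfrak n$ and $y\in\mathfrak n^{(c-1)}$, and pass to $B^\circ$-eigen-quotients: pick a $T$-weight vector line in the quotients $\mathfrak n/[\mathfrak n,\mathfrak n]$ and $\mathfrak n^{(c-1)}/\mathfrak n^{(c)}$, together with their image in $\mathfrak z$, on which $U$ acts trivially after taking suitable $U$-fixed quotients or sublines (Lie–Kolchin for the solvable group $B^\circ$). If $\lambda,\mu$ are the characters on the two lines, the bracket line carries character $\lambda\mu$. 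With linear coordinates $\ell_1,\ell_2,\ell_3$ of those weights, the function $\ell_3/(\ell_1\ell_2)$ is then a $B^\circ$-semi-invariant of trivial weight, hence invariant, and it is nonconstant because the bracket is nonzero.

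The main obstacle is making this last step rigorous: choosing coordinates that are genuinely $U$-equivariant functions on $\mathfrak n$ rather than merely on subquotients, and guaranteeing that the bracket relates the weights multiplicatively. One option is to work on the $B^\circ$-stable quotient $\mathfrak n/\mathfrak n^{(3)}$ of class $2$, reducing to the Heisenberg-type situation $[v,w]\in\mathfrak z$, and then compare the character on $\Lambda^2$ of the abelianization with the character on $\mathfrak z$. The alternative is to use the $U$-invariance of the Pfaffian-type function $\ell_{\mathfrak z}/q$, where $q$ is a quadratic form coming from the bracket. The Heisenberg example with five orbits shows that nilpotency of $B^\circ$ is exactly what prevents a weight-mixing action from making such an invariant constant.
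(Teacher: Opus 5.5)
Your reduction agrees with the paper's proof. You restrict $A$ to the characteristic subgroup $N$, transfer the abstract action to $\Aut_{\Q}(\mathfrak n)$ by the Mal'cev correspondence, and pass to the Zariski closure so that a connected nilpotent group $B^\circ$ has a dense orbit. You handle number fields by restriction of scalars. The gap is the decisive step, which you leave open: you never produce a nonconstant $B^\circ$-invariant rational function when $\mathfrak n$ is nonabelian, and the construction you sketch does not work as stated. Coordinates on the subquotient $\mathfrak n^{(c-1)}/\mathfrak n^{(c)}$ or on the subspace $\mathfrak z$ are not functions on $\mathfrak n$. If you use $U$-fixed sublines, Lie--Kolchin gives no control over whether they bracket nontrivially. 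For example, take the Heisenberg algebra $\langle X,Y,Z\rangle$ with $[X,Y]=Z$, and let $B^\circ$ be the one-parameter group $Y\mapsto Y+sX$, $s\in\Qbar$, fixing $X$ and $Z$. The only fixed line in $\mathfrak n/[\mathfrak n,\mathfrak n]$ is spanned by the image of $X$, and $[X,X]=0$, so your recipe produces nothing, even though $Y^*$ is an invariant. If you use $U$-fixed quotient lines instead, the bracket is not defined on them, so nothing forces the weights of $\ell_1,\ell_2,\ell_3$ into the multiplicative relation you need.

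The missing idea is to use the $T$-weight decomposition of the whole space instead of lower-central subquotients. Since $B^\circ=T\times U$ is connected nilpotent, $T$ commutes with $U$. Hence each weight space $\mathfrak n_\chi$ of $V=\mathfrak n\otimes\Qbar$ is $B^\circ$-stable, and the projection $V\to\mathfrak n_\chi$ is $B^\circ$-equivariant. Because $U$ is unipotent, there is a nonzero $U$-fixed linear form on $\mathfrak n_\chi$. Composing it with this projection gives a linear form $f_\chi$ on all of $V$ with $f_\chi(tu\cdot v)=\chi(t)f_\chi(v)$. If $\mathfrak n$ is nonabelian, then $[\mathfrak n_\chi,\mathfrak n_\psi]\neq0$ for some weights $\chi,\psi$. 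Because $T$ acts by Lie automorphisms, this bracket lies in $\mathfrak n_{\chi\psi}$, so $\theta=\chi\psi$ is also a weight. Then $R=f_\chi f_\psi/f_\theta$ is $B^\circ$-invariant and homogeneous of degree one, hence nonconstant, which contradicts the dense orbit. This is exactly Lemma~\ref{lem:dense-obstruction}, and it needs no class-two reduction or Pfaffian. Note that nonconstancy comes from homogeneity, not from the bracket being nonzero; the bracket is used only to guarantee that $\chi\psi$ occurs as a weight.
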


The hypothesis on $A$ cannot be weakened to solubility, even to
metabelianness; see Proposition~\ref{prop:metabelian}.

Theorems~\ref{thm:main} and Corollary~\ref{thm:rational} are proved in
Sections~2--3. Section~\ref{sec:nilpotent-actions} proves
Theorem~\ref{thm:abelian-action}, and Section~\ref{sec:examples} gives
examples and limits of the hypotheses.
%All amenability statements refer to discrete groups and are consequences,
%not inputs, of the structural conclusions.

\section{Prime roots And the spectrum}
We first give the field-theoretic facts needed in the proof.

\begin{lemma}\label{lem:binomial}
Let $L$ be a field of characteristic zero, let $p$ be prime, and let
$a\in L^\times$. If $a\notin(L^\times)^p$, then $X^p-a$ is irreducible over $L$.
\end{lemma}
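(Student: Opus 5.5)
The plan is the classical argument through norms. Suppose $X^p-a$ is reducible over $L$. Fix an algebraic closure $\overline L$ and a root $\alpha\in\overline L$ of $X^p-a$. Since $\operatorname{char}L=0$, the polynomial $X^p-1$ is separable, so there is a primitive $p$-th root of unity $\zeta\in\overline L$. The roots of $X^p-a$ in $\overline L$ are then exactly $\zeta^i\alpha$ for $0\le i<p$, and $\alpha\neq 0$ because $a\neq0$.

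Reducibility gives a monic factor $g\in L[X]$ of degree $d$ with $1\le d\le p-1$. Over $\overline L$, $g$ is a product of $d$ of the linear factors $X-\zeta^i\alpha$. Its constant term is therefore $\pm\zeta^{s}\alpha^d$ for some integer $s$. Hence $c:=\zeta^{s}\alpha^d$ lies in $L^\times$, and raising to the $p$-th power gives
\[
c^p=\zeta^{sp}\alpha^{dp}=a^d .
\]

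Now use that $p$ is prime. Since $1\le d<p$, we have $\gcd(d,p)=1$, so there are integers $u,v$ with $ud+vp=1$. Then
\[
a=a^{ud}a^{vp}=(c^{u})^p(a^{v})^p=(c^u a^v)^p,
\]
and $c^ua^v\in L^\times$. So $a\in(L^\times)^p$, contradicting the hypothesis. Therefore $X^p-a$ is irreducible over $L$.

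The only point needing care is the constant-term computation. The roots of unity appearing there need not lie in $L$, so the argument must not assume they do. That is handled by working in $\overline L$ and then noting that $c$ itself is a coefficient of $g$, hence lies in $L$, before the Bézout step. Characteristic zero is used only to guarantee that there are $p$ distinct $p$-th roots of unity, so that the roots of $X^p-a$ have the stated form.
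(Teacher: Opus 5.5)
Your proof is correct and is essentially the paper's argument: the paper takes a root $b$ with $d=[L(b):L]<p$, obtains $\operatorname{N}_{L(b)/L}(b)^p=a^d$, and concludes with the same B\'ezout step. Your element $c=\zeta^s\alpha^d$ is, up to sign, the constant term of a degree-$d$ factor, which is exactly that norm when the factor is the minimal polynomial of $b$; the norm phrasing just avoids writing the roots explicitly as $\zeta^i\alpha$.
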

\begin{proof}
If the polynomial were reducible, it would have a root $b$ with
$1\leq d=[L(b):L]<p$. Taking the field norm in $b^p=a$ gives
$\operatorname{N}_{L(b)/L}(b)^p=a^d$.
Choose integers $u,v$ with $ud+vp=1$. Then
\[
 a=\bigl(\operatorname{N}_{L(b)/L}(b)^u a^v\bigr)^p,
\]
a contradiction.
\end{proof}

\begin{lemma}\label{lem:prime-roots}
Let $L/\Q$ be finitely generated and let $a\in L^\times$ be an element
that is not a root of unity. Then $a$ has $p$-th roots for only finitely many
primes $p$ in $L$, i.e., $a\in (L^\times)^p$
\end{lemma}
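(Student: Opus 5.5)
The idea is to reduce to a finitely generated $\Z$-algebra and exploit the finite generation of its unit group. I will use the multiplicative group of a finitely generated domain together with a Dirichlet--Samuel-type theorem: the unit group of a finitely generated $\Z$-domain is finitely generated (Roquette/Samuel).

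First I choose a finitely generated subring $R\subseteq L$ with fraction field $L$ and $a\in R^\times$; for instance, take generators of $L$ over $\Q$ and adjoin $a$, $a^{-1}$. I then normalize $R$. The integral closure of a finitely generated $\Z$-domain in its (finitely generated) fraction field is again finitely generated, by Nagata's theorem: $\Z$ is a Nagata ring, hence so is $R$. So I may assume $R$ is normal.

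If $b\in L$ satisfies $b^p=a$, then $b$ is integral over $R$, since it is a root of the monic polynomial $X^p-a$. By normality $b\in R$. Likewise $b^{-1}$ satisfies $X^p-a^{-1}$, so $b\in R^\times$.

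By the unit theorem, $R^\times\cong \mu\times \Z^r$ with $\mu$ finite, namely the roots of unity in $R$. Write $a=\zeta\cdot \prod u_i^{c_i}$ with respect to a basis $u_1,\dots,u_r$ of the free part. Since $a$ is not a root of unity, some $c_i\neq 0$. Suppose $a=b^p$ with $b\in R^\times$. Projecting to the free part $\Z^r$ gives $p\mid c_i$ for every $i$. This can happen only for the finitely many primes dividing a nonzero $c_i$, which gives the claim.

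The main obstacle is the unit theorem for finitely generated domains in characteristic zero. I intend to quote it (Samuel, Lang's \emph{Fundamentals of Diophantine Geometry}) rather than prove it. If a self-contained route is preferred, there is an alternative. Embed $L$ into a finite extension of $\Q(t_1,\dots,t_s)$ and specialize the transcendental variables to rational values at which $a$ maps to a non-root of unity in a number field $K$, choosing the specialization compatibly with the roots $b$ (place theory). This reduces the problem to number fields, where the Dirichlet unit theorem for $S$-units applies. That route needs care about specializations, so I prefer the direct citation.
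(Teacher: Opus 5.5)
Your proof is correct, but it takes a different route from the paper. The paper splits according to whether $a$ is algebraic.
\begin{enumerate}
\item If $a$ is algebraic, every $p$-th root of $a$ in $L$ lies in the relative algebraic closure $C$ of $\Q$ in $L$, which is shown to be a number field. Then $\mathfrak p$-adic valuations (for non-units) and Dirichlet's unit theorem for $\mathcal O_C^\times$ (for units) leave only finitely many $p$.
\item If $a$ is transcendental, the paper extends $a$ to a transcendence basis. Eisenstein's criterion at the prime $a$ makes $X^p-a$ irreducible over $E=\Q(a,t_2,\ldots,t_r)$, which gives $p=[E(b):E]\le[L:E]$.
\end{enumerate}
You handle both cases at once. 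After passing to a normal finitely generated $\Z$-domain $R$ with fraction field $L$ and $a\in R^\times$, every $p$-th root of $a$ in $L$ is forced into $R^\times$. The unit theorem then turns the question into divisibility in $R^\times/\mu\cong\Z^r$, giving $p\mid\gcd(c_1,\ldots,c_r)\neq0$.

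What each route buys: yours is shorter and uniform, and it exhibits the exceptional primes explicitly as divisors of a fixed nonzero integer. However, it depends on two substantial external results: finiteness of normalization for finitely generated $\Z$-algebras (Nagata), and the Roquette--Samuel unit theorem for finitely generated domains. The paper's argument needs only Dirichlet's unit theorem, Eisenstein's criterion and a degree count, and in the transcendental case it gives the concrete bound $p\le[L:E]$ without looking at units at all. Since your main argument rests on the cited theorems, the specialization fallback you sketch is not needed. As stated it would require genuine care, so you are right not to rely on it.
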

\begin{proof}
Suppose first that $a$ is algebraic over $\Q$. The relative algebraic
closure $C$ of $\Q$ in $L$ is a number field. Indeed, choose a transcendence
basis $T$ for $L/\Q$ and let $D=[L:\Q(T)]$. Every finite subextension
$C_0/\Q$ of $C/\Q$ satisfies
\[
 [C_0:\Q]=[C_0(T):\Q(T)]\leq D.
\]
The composition of two such subextensions is also a subextension.
Choosing one of maximal degree can shows that $C/\Q$ itself is finite.
If $b^p=a$ with $b\in L$, then $b$ is algebraic over $\Q$, hence lies in $C$.

If some finite-prime valuation $v_{\mathfrak p}(a)$ in $C$ is nonzero,
then $b^p=a$ forces $p\mid v_{\mathfrak p}(a)$ and allows only finitely
many primes. Otherwise $a$ is a unit and so is every such $b$.
Dirichlet's unit theorem~\cite[Chapter~I]{Neukirch} identifies
$\mathcal O_C^\times/\mu(C)$ with a finitely generated free abelian group.
The nonzero class of $a$ is divisible by only finitely many primes.

Now suppose that $a$ is transcendental over $\Q$.
Extend $a$ to a transcendence basis $a,t_2,\ldots,t_r$ and let
$E=\Q(a,t_2,\ldots,t_r)$. In the polynomial ring
$\Q(t_2,\ldots,t_r)[a]$, By using Eisenstein's criterion at the prime $a$, we can shows
that $X^p-a$ is irreducible over $E$. If $b\in L$ satisfies $b^p=a$, then
\[
                 p=[E(b):E]\leq[L:E].
\]
Again only finitely many primes satisfy the inequality.
\end{proof}

\begin{proposition}\label{prop:spectrum}
Let $G\leq\GL_n(k)$, where $k/\Q$ is finitely generated.
If $\Pfam(G)$ is finite, every eigenvalue of every element of $G$ is a root
of unity. Moreover, only finitely many different eigenvalues occur in $G$.
\end{proposition}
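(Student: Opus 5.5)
Since $\Pfam(G)$ is finite, some pair of exponents gives the same image, and one can do better. Among the sets $P_{p}(G)$, $p$ prime, only finitely many are distinct. So there is an infinite set $S$ of primes on which $p\mapsto P_p(G)$ is constant, equal to some $P$. Take any prime $q\in S$ and any $p\in S$ larger than $q$. Then $P_q(G)=P_{p}(G)$. Iterating, $P_{q^j}(G)\supseteq P_{p^j}$-type relations are awkward, so I will instead work directly with a single element.

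Fix $g\in G$ and an eigenvalue $\lambda$ of $g$, lying in a finite extension $L$ of $k$ of degree at most $n!$ (the splitting field of the characteristic polynomial over $k$). The key step is to show that $g$ has a $p$-th root in $G$ for infinitely many primes $p$. Choose $r$ with $P_r(G)=P_{rp}(G)$ for infinitely many primes $p$; this is possible by pigeonhole on the finitely many sets $P_r$ with $r$ ranging over products. More concretely, consider the finitely many sets $P_m(G)$. For each prime $p$, the chain $P_1\supseteq P_p\supseteq P_{p^2}\supseteq\cdots$ need not be monotone in general, but $P_{p^{j}}(G)\supseteq P_{p^{j+1}}(G)$ does hold since $x^{p^{j+1}}=(x^p)^{p^j}$. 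By finiteness of $\Pfam(G)$, for each $p$ this chain stabilizes at some exponent $p^{j}$. However, $j$ may depend on $p$.

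To control $j$ uniformly, I use the fact that the sets $P_{p^j}(G)$ for distinct primes and all $j\ge1$ are all drawn from a finite family $\{Q_1,\dots,Q_s\}$. If $P_{p^{j}}\ne P_{p^{j+1}}$, the strict chain has length at most $s$, so $j\le s$ uniformly. Put $m=$ the exponent $s$ and $h=g^{p^s}$-style element. Specifically, $x:=g^{M}$ with $M=\lcm$ of small numbers is not needed: take $h\in P_{p^s}(G)=P_{p^{s+t}}(G)$ for all $t$. Hence every element of $P_{p^s}(G)$ is a $p^{t}$-th power for all $t$.

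To get infinitely many primes for a single element, use that $P_{p^s}(G)$ ranges over at most $s$ sets. So infinitely many primes $p$ give the same set $Q$, and $Q\ni g^{N}$ for suitable $N$ only if we arrange it. Here is the cleaner route, which I will adopt: the identity $P_{p^s}(G)=Q$ for infinitely many $p$ gives $Q\subseteq P_{p}(G)$ for all these $p$, and $g^{p_0^{s}}\in Q$ for a fixed $p_0$ in this infinite set. Thus $h=g^{p_0^s}$ has $p$-th roots $y_p\in G$ for infinitely many primes $p$. An eigenvalue of $h$ is $\lambda^{p_0^s}$. Each eigenvalue of $y_p$ lies in a field $L'$ of degree at most $n!$ over $k$, and its $p$-th power is an eigenvalue of $h$. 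Passing to the compositum of all extensions of $k$ of degree $\le n!$ is infinite, so this is the main obstacle. I will handle it by a degree argument. Let $E=k(\mu)$ with $\mu=\lambda^{p_0^s}$, assumed not a root of unity. By Lemma~\ref{lem:prime-roots}, $\mu\in(E^\times)^p$ for only finitely many $p$. For larger $p$, Lemma~\ref{lem:binomial} makes $X^p-\mu$ irreducible over $E$, so any $p$-th root of $\mu$ has degree $\ge p$ over $E$, hence over $k$ degree $\ge p/[E:k]$, exceeding $n!$ for large $p$. This contradicts the eigenvalue of $y_p$ having degree $\le n$ over $k$ (which I need to match, up to the finitely many eigenvalues of $h$, with a $p$-th root of $\mu$). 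Hence $\mu$, and so $\lambda$, is a root of unity.

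For finiteness, each eigenvalue is a root of unity of degree $\le n$ over $k$. The roots of unity in a degree $\le n$ extension of $k$ lie in the finite extension $C(\zeta)$ where $C$ is the algebraic closure of $\Q$ in $k$ (a number field, as in the proof of Lemma~\ref{lem:prime-roots}). Since $\varphi(\mathrm{ord})\le n[C:\Q]$, only finitely many orders, hence finitely many roots of unity, occur.
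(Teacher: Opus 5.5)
Your final argument is correct and is essentially the paper's. Pigeonhole produces one element $h=g^{p_0^s}$ lying in $P_p(G)$ for infinitely many primes $p$, and an eigenvalue $\nu$ of a $p$-th root $y_p$ with $\nu^p=\lambda^{p_0^s}$ and $[k(\nu):k]\le n$ contradicts Lemmas~\ref{lem:binomial} and~\ref{lem:prime-roots}; this is the contrapositive of the paper's observation that $g^p\in P_p(G)\setminus P_q(G)$ for distinct $p,q\in S$, and applying Lemma~\ref{lem:prime-roots} directly to $\mu$ in $k(\mu)$ lets you skip the paper's B\'ezout step for $\lambda^r$.

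The detour through the chains $P_{p^j}(G)$ is never used. Its uniform bound $j\le s$ would also need the remark that $P_{p^j}(G)=P_{p^{j+1}}(G)$ forces $P_{p^{j+1}}(G)=P_{p^{j+2}}(G)$. Pigeonhole on the sets $P_p(G)$ themselves, with $h=g^{p_0}$, already suffices.
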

\begin{proof}
Assume not, there is a $g\in G$ has an eigenvalue $\lambda$ which is not a root of
unity, and let $L=k(\lambda)$. This is again a finitely generated field.
By Lemma~\ref{lem:prime-roots}, there is an infinite set $S$ of primes
$p>n$ for which $\lambda\notin(L^\times)^p$.
If $p\in S$ and $p\nmid r$,  we have
$\lambda^r\notin(L^\times)^p$ as well. If not, assume $\lambda^r=y^p$ for some $y\in L^\times$, since $p\nmid r$,$\gcd(r,p)=1.$
by Bezout's theorem,  there exist $u,v$ such that $ur+vp=1.$ Then we have $\lambda=\lambda^{ur+vp}=(\lambda^r)^u(\lambda^p)^v.$
Then $\lambda=(y^p)^u(\lambda^p)^v=(y^u\lambda^v)^p.$ So $\lambda\in(L^\times)^p,$ contradiction.

If $g^rin P_p(G)$, $g^r=x^p$ for $x\in G$, then some eigenvalue $\mu$ of $x$ satisfies
$\mu^p=\lambda^r$. Lemma~\ref{lem:binomial} and the characteristic
polynomial of $x$ give the contradictory inequalities
$$p=[L(\mu):L]\leq[k(\mu):k]\leq n.$$ 
Thus $g^r\notin P_p(G)$ whenever $p\in S$ and $p\nmid r$.

Next, we want to construct infinite $P_p(G)$. For distinct $p,q\in S$ we have
$g^p\in P_p(G)\setminus P_q(G)$ since $p\nmid q$. The sets $P_p(G)$ are therefore
pairwise distinct, contrary to the hypothesis  $\Pfam(G)$ is finite.

To obtain uniform finiteness of the spectrum, choose a transcendence basis
$T$ of $k/\Q$ and let $D=[k:\Q(T)]$. If a root of unity $\zeta$ is
as an eigenvalue, then
\[
 [\Q(\zeta):\Q]
 =[\Q(\zeta)(T):\Q(T)]
 \leq[k(\zeta):\Q(T)]\leq nD.
\]
A primitive $r$-th root of unity has degree $\varphi(r)$ over $\Q$.
Only finitely many $r$ satisfy $\varphi(r)\leq nD$, so only finitely many
roots of unity can satisfy the condition.
\end{proof}

\begin{proposition}\label{prop:unipotent}
Under the hypotheses of Proposition~\ref{prop:spectrum}, let
$H=\overline G^{\,\zar}$ and $N_0=G\cap H^\circ$.
Then $H^\circ$ is unipotent, $N_0\trianglelefteq G$ has finite index,
and $N_0$ is torsion-free nilpotent of class at most $n-1$.
Furthermore, $N_0$ is exactly the set of unipotent elements of $G$.
\end{proposition}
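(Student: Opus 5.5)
The plan is to use Proposition~\ref{prop:spectrum} to control the eigenvalues of the Zariski closure $H$, deduce that $H^\circ$ is unipotent, and then read off the remaining assertions from standard facts about unipotent groups.

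\emph{Step 1: $H^\circ$ is unipotent.} By Proposition~\ref{prop:spectrum} there is a finite set $\Lambda$ of roots of unity containing every eigenvalue of every element of $G$. Let $r$ be the least common multiple of their orders. For each $g\in G$, every eigenvalue of $g^r$ equals $1$, so $(g^r-1)^n=0$. The map $h\mapsto (h^r-1)^n$ is polynomial, so the condition $(h^r-1)^n=0$ defines a Zariski-closed subset of $\GL_n(\overline k)$. It contains $G$, hence contains $H$. Therefore $h^r$ is unipotent for every $h\in H$.

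Now take $h\in H^\circ$ with Jordan decomposition $h=h_sh_u$. Since $h_s$ lies in $H^\circ$, it lies in a torus $T\le H^\circ$. On $T$ the $r$-th power map is surjective, so the condition that every element of $T$ has $r$-th power unipotent forces every element of $T$ to have $r$-th power trivial; as $T$ is divisible, $T=1$. This argument shows that the semisimple part of every element of $H^\circ$ is trivial, so every element of $H^\circ$ is unipotent. An alternative route is to pass to $H^\circ/R_u(H^\circ)$, which is reductive, and observe that a connected reductive group in which all elements have finite order is trivial. Either way, $H^\circ$ is a unipotent group.

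\emph{Step 2: index, normality, class and torsion-freeness.} Since $[H:H^\circ]<\infty$ and $H^\circ\trianglelefteq H$, the subgroup $N_0=G\cap H^\circ$ is normal of finite index in $G$. Because $H^\circ$ is unipotent, Kolchin's theorem makes $H^\circ$ conjugate into $\UT_n(\overline k)$. That group is nilpotent of class $n-1$ and is torsion-free in characteristic zero. Hence $N_0$ is torsion-free nilpotent of class at most $n-1$.

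\emph{Step 3: $N_0$ is exactly the set of unipotent elements of $G$.} One inclusion follows from Step~1. For the converse, let $u\in G$ be unipotent. If $u=1$ there is nothing to prove. Otherwise $u=\exp(X)$ with $X$ nilpotent and nonzero, and the Zariski closure of $\langle u\rangle$ is the one-parameter group $\{\exp(tX)\}$. This group is connected and contained in $H$, hence contained in $H^\circ$. Therefore $u\in G\cap H^\circ=N_0$.

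The main obstacle is Step~1, namely transferring the eigenvalue information from $G$ to the identity component of its closure. The key device is the closed condition $(h^r-1)^n=0$ together with the absence of nontrivial tori of finite exponent. The care needed is in Jordan decomposition within $H^\circ$: one must use that $h_s$ and $h_u$ lie in $H^\circ$, and that a semisimple element of a connected group lies in a torus, or else argue via the reductive quotient as indicated above.
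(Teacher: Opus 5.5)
Your argument is correct. Steps~2 and~3 are exactly what the paper does: Kolchin triangularization into $\UT_n$, finiteness of $H/H^\circ$, and the one-parameter subgroup $t\mapsto\exp(tX)$ placing every unipotent element of $H$ in $H^\circ$.

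The genuine difference is in Step~1.
\begin{itemize}
\item The paper never uses tori. The coefficients of the characteristic polynomial are regular functions on $H^\circ$. By Proposition~\ref{prop:spectrum} they take only finitely many values on the dense subgroup $G\cap H^\circ$, hence finitely many values on $H^\circ$. They are therefore constant on the irreducible variety $H^\circ$ and equal to their values at $I_n$, so $\chi_h=(X-1)^n$ for all $h\in H^\circ$.
\item You instead push the identity $(h^r-1)^n=0$ from $G$ to all of $H$. You then remove semisimple parts using the Jordan decomposition inside $H^\circ$, the theorem that a semisimple element of a connected group lies in a torus, and divisibility of tori.
\end{itemize}
The paper's route is more elementary: it needs only irreducibility of $H^\circ$ and no Borel-subgroup theory. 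Yours does not need density of $G\cap H^\circ$ in $H^\circ$, and it gives a bonus. Since $h^r$ is unipotent for every $h\in H$, your Step~3 places $h^r$ in $H^\circ$, so $H/H^\circ$ has exponent dividing $r$.

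Two small points of precision:
\begin{itemize}
\item In Step~1, $t^r=1$ for $t\in T$ holds because $t^r$ is both semisimple and unipotent. Surjectivity of the $r$-th power map on $T$ is used only afterwards, to conclude $T=1$; your sentence attributes the first step to surjectivity.
\item Your ``alternative route'' through $H^\circ/R_u(H^\circ)$ is incomplete as stated. There you only know that $\bar h^{\,r}$ is unipotent, not trivial, so ``all elements have finite order'' does not follow. You would need the torus argument again, applied to a maximal torus of the reductive quotient.
\end{itemize}
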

\begin{proof}
By Proposition~\ref{prop:spectrum}, only finitely many characteristic
polynomials occur on $G$. Set this finite set as $\Lambda=\{\lambda_1,\ldots,\lambda_s\}.$
For any $g\in G$, the characteristic polynomial of $g$ is $\chi_g(X)=\prod_{j=1}^{n}(X-\mu_j),$ where $\mu_j\in\Lambda.$
Hence $\{\chi_g:g\in G\}$ the possibilities of characteristic polynomials also finite. For any $\chi_h(X)=\det(XI_n-h)=X^n+c_1(h)X^{n-1}+\cdots+c_n(h).$ $c_i(h)$ is a polynomial of $h_{jk}$. Thus, $c_i:H^\circ\to\mathbb A^1$
is regular function on $H^\circ$. They take finitely many values on the
Zariski-dense subgroup $G\cap H^\circ$, hence finitely many values on
$H^\circ$ itself. Since a connected algebraic group is irreducible, each
coefficient function is constant. (if not, then reducible) Its value is determined at the identity. Thus
$$\det(XI_n-h) =\chi_{I_n}(X)=\det(XI_n-I_n)=(X-1)^n\qquad(\forall h\in H^\circ).$$

So $H^\circ$ is unipotent. The triangularization theorem for unipotent
algebraic groups~\cite{Humphreys} places it inside a conjugate of
$\UT_n(\overline{k})= \left\{\begin{pmatrix}
1&*&\cdots&*\\
0&1&\cdots&*\\
\vdots&&\ddots&\vdots\\
0&\cdots&0&1
\end{pmatrix}
\right\}.$. $\UT_n$ is nilpotent and nilpotency class is $n-1$. Consequently $N_0$ is nilpotent of class at most
$n-1$. Nonidentity unipotent matrices in characteristic zero have infinite
order. The group of components $H/H^\circ$ is finite since $H$ has finite components. Considering the natural map $G\to H/H^\circ$, so the kernel is $N_0=G\cap H^\circ$. Identity component is normal subgroup of algebraic group $H^\circ\trianglelefteq H.$, so does $N_0$. We have $G/N_0\hookrightarrow H/H^\circ$, so $N_0$ is finite index normal subgroup in $G$.

Next we want to show every unipotent element $u$ of an algebraic group in
characteristic zero belongs to its identity component. If $u\ne I_n$,
the polynomial map
$$ 
\gamma:\mathbb A^1\to \mathrm{GL}_n\qquad
 t\longmapsto\exp(t\log u)
$$
is an algebraic one-parameter subgroup, with image isomorphic to the
additive group. For any $m\in\Z$, $\gamma(m)=\exp(m\log u)=u^m.$ $\gamma(\Z)\subset H$ since $u\in H$
$\Z$ is Zariski dense in $\mathbb{A}^1$, so does $\gamma(\Z)$in $\gamma(\mathbb A^1)$.  Hence the image lies in the
algebraic group and is connected,$I=\gamma(0)$ and $u=\gamma(1)$. Applying this to $H$ gives
$u\in H^\circ$. Every unipotent element of $H$ belongs to $H^\circ$. Thus, if $u_0\in G$ be unipotent, then $u\in G\cap H^\circ=N_0$, 
$N_0=\{g\in G:g\text{ is unipotent}\}.$
\end{proof}

\section{Stable power images and the canonical divisible subgroup}
For a unipotent matrix $u$, the finite matrix logarithm is
\[
 \log u=\sum_{j=1}^{n-1}\frac{(-1)^{j+1}}{j}(u-I_n)^j.
\]
It is inverse to the finite exponential on nilpotent matrices. In particular,
$\log(u^m)=m\log u$. Thus power maps are injective on any subgroup of
unipotent matrices in characteristic zero. 

\begin{lemma}\label{lem:stable}
Let $G$ be a group with finite $\Pfam(G)$. Suppose that
$N_0\trianglelefteq G$ has finite index $d$ and that every positive-integer
power map on $N_0$ is injective. Then $N_0$ is uniquely divisible and
$N_0=P_d(G)$. In particular, $N_0$ is fully invariant.
\end{lemma}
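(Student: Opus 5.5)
The plan is to use the finiteness of $\Pfam(G)$ only through a stabilization argument along a chain of exponents. Then I will transfer divisibility from a stabilized power image back to $N_0$ using injectivity of the power maps on $N_0$.

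First, two trivial observations. Since $[G:N_0]=d$ and $N_0\trianglelefteq G$, every $g\in G$ satisfies $g^d\in N_0$, so $P_{d\ell}(G)\subseteq N_0$ for every $\ell\geq1$. Also, if $a\mid b$ then $P_b(G)\subseteq P_a(G)$, because $g^b=(g^{b/a})^a$.

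Fix $m\geq1$ and consider the descending chain
\[
P_{dm}(G)\supseteq P_{(dm)^2}(G)\supseteq P_{(dm)^3}(G)\supseteq\cdots .
\]
Since $\Pfam(G)$ is finite, this chain stabilizes. So there is $j\geq1$ such that, with $M=(dm)^j$, we have $P_M(G)=P_{M\cdot dm}(G)$.

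Now let $x\in N_0$. Then $x^M\in P_M(G)=P_{Mdm}(G)$, so $x^M=g^{Mdm}$ for some $g\in G$. Put $y=g^{dm}$; since $d\mid dm$, we have $y\in N_0$. Then $x^M=y^M$ with $x,y\in N_0$, and injectivity of the $M$-th power map on $N_0$ gives $x=y=(g^d)^m$ with $g^d\in N_0$.

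This single identity does all the work:
\begin{enumerate}
\item Since $m$ was arbitrary, every $x\in N_0$ has an $m$-th root in $N_0$, so $N_0$ is divisible.
\item Uniqueness of roots is exactly the assumed injectivity, so $N_0$ is uniquely divisible.
\item Taking $m=1$ gives $x=g^d\in P_d(G)$, so $N_0\subseteq P_d(G)$. Combined with the first observation, $N_0=P_d(G)$.
\end{enumerate}

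Full invariance then follows as in the introduction. For any endomorphism $\alpha$ of $G$ we have $\alpha(g^d)=\alpha(g)^d$, so $\alpha(P_d(G))\subseteq P_d(G)$.

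The only delicate step is choosing the chain. Stabilizing only $P_{d^j}(G)$ would not yield $m$-th roots for $m$ coprime to $d$. Using exponents $(dm)^j$ makes the root $y=g^{dm}$ land in $N_0$, which is needed to apply injectivity there. Once that choice is made, the rest is routine.
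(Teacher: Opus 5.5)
Your proposal is correct and is essentially the paper's own argument. Both stabilize the chain $P_{dm}(G)\supseteq P_{(dm)^2}(G)\supseteq\cdots$, write $x^{M}=(g^{dm})^{M}$ with $g^{dm}\in N_0$, and use injectivity of the $M$-th power map on $N_0$ to obtain $x=(g^d)^m$. The only cosmetic difference is that you get $N_0\subseteq P_d(G)$ by taking $m=1$, whereas the paper deduces it from the divisibility of $N_0$.
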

\begin{proof}
Fix $q\geq1$ and let $a=dq$. The descending chain
$$
 P_a(G)\supseteq P_{a^2}(G)\supseteq P_{a^3}(G)\supseteq\cdots
$$
stabilizes, since all its terms belong to the finite family $\Pfam(G)$.
Suppose $j\geq1$ with $P_{a^j}(G)=P_{a^{j+1}}(G)$.
For $u\in N_0$, we can find a $x\in G$ satisfying
\[
                 u^{a^j}=x^{a^{j+1}}=(x^a)^{a^j}.
\]
Since $x^d\in N_0$, also $x^a\in N_0$. Injectivity on $N_0$ gives
\[
                      u=x^a=(x^d)^q.
\]
Thus every element of $N_0$ has a $q$-th root in $N_0$. Such a root is
unique by hypothesis. Since $q$ was arbitrary, $N_0$ is uniquely divisible.
The inclusion $P_d(G)\subseteq N_0$ follows from the finite quotient, and
the opposite inclusion follows from divisibility of $N_0$.
Every endomorphism of $G$ preserves $P_d(G)$ as a subset.
\end{proof}

We include the splitting argument to separate the stabilization step from
the standard extension theory it subsequently uses.

\begin{lemma}\label{lem:split}
If $N\trianglelefteq G$ is uniquely divisible and nilpotent, and $G/N$
is finite, then $G=N\rtimes F$ for a finite subgroup $F\leq G$.
\end{lemma}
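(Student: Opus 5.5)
We will prove the splitting statement by induction on the nilpotency class of $N$, using the vanishing of cohomology of finite groups with coefficients in uniquely divisible abelian groups. The abelian base case is the standard Schur--Zassenhaus-type argument. Suppose $N$ is abelian, hence a $\Q$-vector space on which $Q=G/N$ acts by conjugation. Let $m=|Q|$. The extension $1\to N\to G\to Q\to 1$ is classified by a class in $H^2(Q,N)$, and this group is annihilated by $m$. Multiplication by $m$ is an automorphism of $N$, so it induces an automorphism of $H^2(Q,N)$, and therefore $H^2(Q,N)=0$. Hence the extension splits, and any complement $F$ is isomorphic to $Q$, so it is finite. (One can avoid cohomology: choose a transversal, form the factor set, and average it using $m$-th roots in $N$ to correct the transversal into a homomorphic section.)

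For the inductive step, let $Z=Z(N)$. We will check three facts.
\begin{enumerate}
\item $Z$ is characteristic in $N$, hence normal in $G$.
\item $Z$ is uniquely divisible. Uniqueness of roots is inherited from $N$. For existence, if $z\in Z$ and $x^m=z$ with $x\in N$, then for every $y\in N$ we have $(yxy^{-1})^m=z$, so uniqueness of roots gives $yxy^{-1}=x$ and thus $x\in Z$.
\item $N/Z$ is uniquely divisible. Divisibility is clear. For uniqueness, it is the standard fact that in a torsion-free nilpotent group (and $N$ is torsion-free because roots are unique) the upper central factors are torsion-free, i.e.\ $x^m\in Z$ implies $x\in Z$. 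Directly: if $x^m$ is central, then $(yxy^{-1})^m=x^m$, so $yxy^{-1}=x$. Unique $m$-th roots in $N/Z$ then follow from the known result that a torsion-free nilpotent group has unique roots, since $N/Z$ is torsion-free nilpotent.
\end{enumerate}

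Now apply induction to $G/Z$. Its normal subgroup $N/Z$ is uniquely divisible, nilpotent of smaller class, and of finite index. This gives a finite subgroup $\bar F\le G/Z$ with $G/Z=(N/Z)\rtimes\bar F$. Let $G_1$ be the preimage of $\bar F$ in $G$. Then $Z\trianglelefteq G_1$ with $G_1/Z\cong\bar F$ finite and $Z$ abelian and uniquely divisible. The base case yields a finite $F\le G_1$ with $G_1=Z\rtimes F$. It remains to verify that $NF=G$ and $N\cap F=1$. For the first, $NG_1=G$ and $G_1=ZF\subseteq NF$. For the second, $N\cap F$ is a finite subgroup of the torsion-free group $N$, so it is trivial.

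The main obstacle is the uniqueness of roots in $N/Z$, which requires the classical fact that torsion-free nilpotent groups have unique roots (Mal'cev/Chernikov). For the linear case used in the paper we could instead bypass it. There $N$ consists of unipotent matrices, so $N/Z$ can be handled through the Lie algebra $\log N$ and the Mal'cev correspondence. Alternatively we can cite the general theorem directly. Everything else is routine.
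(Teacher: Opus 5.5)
Your proposal is correct and follows essentially the same route as the paper: the abelian case by vanishing of $H^2$ (the paper proves the same thing by explicitly averaging the $2$-cocycle), then induction on the class via $Z=Z(N)$, lifting a complement from $G/Z$ and splitting its preimage over $Z$. The step you flag as the main obstacle, uniqueness of roots in $N/Z$ (which the paper simply asserts), does not need Mal'cev's theorem: if $x^m=y^mz$ with $z\in Z$, write $z=w^m$ with $w\in Z$ by your item (ii), so $x^m=(yw)^m$ and uniqueness of roots in $N$ gives $x=yw$, i.e.\ $xZ=yZ$.
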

\begin{proof}
In fact, we need to prove the short exact sequence $$ 
1\longrightarrow N
\xrightarrow{\iota}G
\xrightarrow{\pi}B
\longrightarrow1.
 $$ must split under the conditions of the lemma,
where $B=G/N$. i.e., there is a group homomorphism $s:B\longrightarrow G$ satisfies $\pi\circ s=id_B$. 
Let $F=s(B)$, since $B$ is finite and $s$ is injective, so does $F$. For any $g\in G $, we have $g=nf$, where $n\in N, f\in F$,
so we have $G=N\rtimes F$.

Since $N$ is nilpotent, so we have $$1=Z_0(N)\le Z_1(N)\le\cdots\le Z_c(N)=N, 
\text{where \textit{c} is the nilpotency class of }N$$

The case $N=1$ is immediate. We induct on the nilpotency class of $N$.
In the abelian case, $N$ is uniquely divisible, so for any $v\in N$, integer $q$, equation $qw=v$ has unique solution. Then we can define $
\frac pqv=p\left(\frac1qv\right).$ It's easy check $N$ is a $\Q$-vector space. 

We will show any extension of a finite group $B$ by a $\Q[B]$-module
$V$ splits. 
An extension is
represented by an additive $2$-cocycle $c(a,b)$. Set
\[
                 t(a)=\frac1{|B|}\sum_{x\in B}c(a,x).
\]
Summing the identity
$c(a,b)+c(ab,x)=a\cdot c(b,x)+c(a,bx)$ over $x\in B$ gives
$\sum_{x\in B}c(a,b)+\sum_{x\in B}c(ab,x)= |B|c(a,b)+|B|t(ab)$ on the left side, 
$\sum_{x\in B}a\cdot c(b,x)+\sum_{x\in B}c(a,bx)=|B|a\cdot t(b)+|B|t(a).$ on the right side. These give 
$$
                 c(a,b)=t(a)+a\cdot t(b)-t(ab).
$$
In fact, this cocycle is a coboundary, so adjusting the section gives a homomorphic
section. Thus, split.

Now, we prove the non-abelian part. For the induction step we let $Z=Z(N)$. The unique $m$-th root of a central
element is central, because conjugation preserves that root equation.
Thus $Z$ is uniquely divisible. Also $N/Z$ is uniquely divisible. Since $N$ is nonabelian and nilpotent,
the nilpotency class of $N/Z$ is strictly smaller
than that of $N$. We therefore apply the induction hypothesis
to the extension
$$
1\longrightarrow N/Z
\longrightarrow G/Z
\longrightarrow G/N
\longrightarrow 1.
$$
It follows that there exists a finite subgroup $\overline F\leq G/Z$ such that 
$G/Z=(N/Z)\rtimes\overline F.$ Let $q:G\longrightarrow G/Z$ be the quotient map, and define
$E=q^{-1}(\overline F).$ Then $E/Z\cong\overline F,\qquad E\cap N=Z.$
Indeed, if $x\in E\cap N$, then$xZ\in\overline F\cap(N/Z)=1,$
so $x\in Z$. Thus we obtain another short exact sequence
$$
1\longrightarrow Z
\longrightarrow E
\longrightarrow\overline F
\longrightarrow1.
$$ where $Z$ is a uniquely divisible abelian group,
hence a $\mathbb Q[\overline F]$-module,
and $\overline F$ is finite.Because the abelian case proved above, this extension splits.

Consequently, there exists a finite subgroup
$F\leq E$ such that
$$
E=Z\rtimes F,
\qquad
F\cong\overline F.
$$

Since $G/Z=(N/Z)\overline F,$ taking inverse images under $q$ gives
$G=NE=NF.$ Furthermore,$N\cap F\subseteq N\cap E=Z,$ while
$F\cap Z=1$because $F$ is a complement to $Z$ in $E$. Hence $N\cap F=1.$
Since $N\trianglelefteq G$, we conclude that $G=N\rtimes F,$ where $F$ is finite.

\end{proof}

We can use the exponent of the finite quotient to control the $P_m(G)$ for $G$ has 
uniquely divisible normal subgroup $N$ and finite quotient,
Nilpotence of $N$ and splitting of the extension are not required.

\begin{proposition}\label{prop:profile}%
Let $N\trianglelefteq G$ be uniquely divisible and let $B=G/N$ be finite
of exponent $e$.  Let $T(G)$ denote the set of finite-order elements of $G$.
Then
\begin{equation}\label{eq:power-union}
          P_m(G)=\bigcup_{s\in T(G)}s^m C_N(s)\qquad(m\geq1).
\end{equation}
Where $C_N(s)$ is the centralizer of $s$ in $N$. Moreover, $P_e(G)=N$, $P_m(G)=P_{\gcd(m,e)}(G)$,
$|\Pfam(G)|=\tau(e)$, and $e$ is the least positive period of the sequence
of power images.
\end{proposition}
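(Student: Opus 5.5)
The approach is to show that every element of $G$ factors as a commuting product of a torsion element and an element of $N$, and to deduce everything else from \eqref{eq:power-union} together with a $p$-adic comparison in the finite quotient $B$.

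\emph{Step 1: commuting factorization.} Let $g\in G$. Then $g^e\in N$, so let $u\in N$ be the unique $e$-th root of $g^e$ in $N$. Conjugation by $g$ preserves $N$ and fixes $g^e$. It therefore sends $u$ to another $e$-th root of $g^e$ in $N$, and uniqueness gives $gug^{-1}=u$. Put $s=gu^{-1}$. Since $g$ and $u$ commute, $s^e=g^eu^{-e}=1$, so $s\in T(G)$. Moreover $u$ commutes with $s$, so $u\in C_N(s)$, and $g^m=s^mu^m\in s^mC_N(s)$.

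For the reverse inclusion, take $s\in T(G)$ and $c\in C_N(s)$. Let $v\in N$ be the unique $m$-th root of $c$. Conjugation by $s$ fixes $c$, so the same uniqueness argument gives $sv=vs$. Hence $(sv)^m=s^mc$. This proves \eqref{eq:power-union}.

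\emph{Step 2: $P_e(G)=N$ and $P_m(G)=P_{\gcd(m,e)}(G)$.} A uniquely divisible group is torsion-free, because $x^r=1=1^r$ forces $x=1$. For $s\in T(G)$, the element $s^e$ lies in $N$ and has finite order, so $s^e=1$. By \eqref{eq:power-union}, $P_e(G)=\bigcup_s C_N(s)\subseteq N$, and the term $s=1$ gives $N\subseteq P_e(G)$.

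Now let $d=\gcd(m,e)$. Since $\gcd(m/d,e/d)=1$, the Chinese remainder theorem gives an integer $a$ with $\gcd(a,e)=1$ and $ad\equiv m\pmod e$. For $s\in T(G)$ we have $s^e=1$, so $t=s^a$ satisfies $t^d=s^m$. Because $a$ is invertible modulo $e$, $s$ is a power of $t$, and hence $C_N(t)=C_N(s)$. Thus $s^mC_N(s)=t^dC_N(t)\subseteq P_d(G)$. Running the same argument with $a^{-1}\bmod e$ gives the reverse inclusion. So the family consists of the sets $P_d(G)$ with $d\mid e$.

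\emph{Step 3: distinctness and the period.} This is the step needing the most care. Let $d\neq d'$ be divisors of $e$. Choose a prime $p$ with $v_p(d)<v_p(d')$, after swapping $d$ and $d'$ if necessary, and let $p^k\,\|\,e$. Pick $b\in B$ of order $p^k$ and a lift $g\in G$ of $b$.

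\begin{itemize}
\item The element $b^d$ has order $p^{k-v_p(d)}$.
\item For any $x\in B$, the order of the $p$-part of $x^{d'}$ divides $p^{k-v_p(d')}$.
\end{itemize}

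Hence $b^d\notin P_{d'}(B)$. The image of $P_{d'}(G)$ in $B$ is contained in $P_{d'}(B)$, so $g^d\notin P_{d'}(G)$. This gives $|\Pfam(G)|=\tau(e)$.

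For the period, Step 2 shows the sequence has period $e$. Suppose $0<t<e$ were also a period. Then $P_e(G)=P_{e+t}(G)=P_{\gcd(t,e)}(G)$. But $\gcd(t,e)$ and $e$ are distinct divisors of $e$, which contradicts the distinctness just proved.
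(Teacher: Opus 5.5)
Your proof is correct and follows essentially the same route as the paper. Both arguments use the commuting decomposition $g=su$ obtained from the unique $e$-th root of $g^e$, the substitution $s\mapsto s^a$ with $a$ a unit modulo $e$, and the comparison of $p$-parts of element orders in $B$ to separate the sets $P_d(G)$ for $d\mid e$. The one small difference is the least-period step: you deduce it from the distinctness just proved, whereas the paper passes to $B$ and observes that $P_{e+t}(B)=P_e(B)=\{1\}$ forces $e\mid t$, which also shows that every period is a multiple of $e$.
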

\begin{proof}
A uniquely divisible group is torsion-free. For $g\in G$, let
$u\in N$ be the unique $e$-th root of $g^e$. $B=G/N$ is finite and with index $e$.
For any $g\in G$, $(gN)^e= N$, $g^e\in N$, there exist unique $u\in N$ such that 
$u^e=g^e$, so $(gug^{-1})^e=gu^eg^{-1}=gg^eg^{-1}=g^e.$ $gug^{-1}$ and $u$ are $e$-th roots
of $g^e$ in $N$. $gug^{-1}\in N.$ hence $gug^{-1}=u$. Let $s=gu^{-1}$. Then
$$
                       g=su=us,\qquad s^e=1.
$$
Every finite-order element of $G$ has order dividing $e$: its $e$-th
power lies in the torsion-free group $N$ and has finite order.

For $s\in T(G)$ the subgroup $C_N(s)$ is uniquely divisible, because for $v\in C_N(s)$, there exist unique $w\in N$, such that $w^m=v$,
$(sws^{-1})^m=sw^ms^{-1}=svs^{-1}=v$, $sws^{-1}$ is also a $m$-th root of $v$ in $N$, by uniqueness, $sws^{-1}=w$,
$w\in C_N(s)$.

Next, We want to show $P_m(G)=\bigcup_{s\in T(G)}s^mC_N(s).$
For any $x\in P_m(G).$ there exist $g\in G$, such that $x=g^m.$
we've proved for $g\in G$ can be uniquely write as $g=su=us,$ where $s\in T(G), u\in N.$
Because $s$ and $u$ commute, $u\in C_N(s).$ $g^m=s^mu^m,$ $u^m\in C_N(s).$
Thus $x=s^mu^m\in s^mC_N(s).$ We have $P_m(G)\subseteq\bigcup_{s\in T(G)}s^mC_N(s).$

For any $x\in s^mC_N(s),$ then $x=s^mv$, for any $v\in C_N(s)$. $C_N(s)$ is uniquely divisible,
there exist $w\in C_N(s)$ such that $w^m=v.$ Since $sw=ws$, $(sw)^m=s^mw^m=s^mv=x.$ Then $x\in P_m(G).$
Thus, $\bigcup_{s\in T(G)}s^mC_N(s)\subseteq P_m(G).$ We have  $P_m(G)=\bigcup_{s\in T(G)}s^m C_N(s)(m\geq1).$

It follows immediately that $P_{m+e}(G)=P_m(G)$ and that $P_e(G)=N$.

Let $d=\gcd(m,e)$. There is an integer $a$ prime to $e$ such that
$da\equiv m\pmod e$. Indeed, the class $m/d$ is a unit modulo $e/d$,
and it can be lifted to a unit modulo $e$ by the Chinese remainder theorem.
The map $s\mapsto s^a$ is a bijection of $T(G)$, and
$C_N(s^a)=C_N(s)$. Substitution in~\eqref{eq:power-union} yields
$P_m(G)=P_d(G)$.

Finally, we need to prove the sets $P_d(G)$ for $d\mid e$ are distinct. To see this, pass to the
finite quotient $B$. For each prime $p\mid e$, let $b=v_p(e)$.
The largest $p$-part of the order of an element of $P_d(B)$ is
$p^{b-v_p(d)}.$
The upper bound follows by taking $d$-th powers; equality follows by
choosing an element whose order has $p$-part $p^b$, and then taking a
power to obtain an element of order exactly $p^b$. Such an element exists
by the definition of the exponent; an element of order $e$ is not required. Thus different divisors $d$ give different power images
in $B$, hence in $G$. This proves $|\Pfam(G)|=\tau(e)$.
Finally, if $t>0$ is a period, then $P_{e+t}(B)=P_e(B)=\{1\}$.
Consequently $e\mid e+t$, so $e\mid t$. Hence $e$ is the least period.
\end{proof}

\begin{proof}[Proof of Theorem~\ref{thm:main}]
Assume (i). Proposition~\ref{prop:unipotent} provides
$N_0=G\cap H^\circ$, a finite-index normal unipotent subgroup.
The matrix logarithm makes its power maps injective.
Lemma~\ref{lem:stable} therefore makes it uniquely divisible and fully
invariant. Set $N=N_0$ and apply Lemma~\ref{lem:split} to obtain
$G=N\rtimes F$ with $F$ finite. This proves (ii).
Conversely, (ii) implies (i) by Proposition~\ref{prop:profile}.

Proposition~\ref{prop:unipotent} proves the first two equalities
in~\eqref{eq:canonical} and the nilpotency-class bound. Every homomorphism
from a divisible group to a finite group is trivial: if the target has
exponent $r$, write an element of the source as an $r$-th power.
Hence $N$ lies in every finite-index normal subgroup of $G$.
Since $G/N$ is finite, $N=\Rf(G)$.
The same argument applied to an arbitrary divisible subgroup of $G$
shows that it is contained in $N$. Finally,
Proposition~\ref{prop:profile} proves~\eqref{eq:profile} and the least-period
assertion. The subgroup in (ii) is canonical, although the chosen finite
complement is not claimed to be canonical.
\end{proof}

\begin{proof}[Proof of Corollary~\ref{cor:kourovka}]
Apply~\eqref{eq:orbit-bound} and Theorem~\ref{thm:main}.
Since $N$ is characteristic, the restricted action of $\Aut(G)$ on $N$
has at most $\omega(G)$ orbits. The full group $\Aut(N)$ has at most
that many orbits. Nilpotent groups and their finite extensions are amenable.
\end{proof}

Before we prove the Corollary~\ref{thm:rational}.
We use the following standard form of Mal'cev completion from
Baues--Grunewald~\cite[\S4.2]{BG}.

\begin{lemma}[Mal'cev completion]\label{lem:malcev}
Let $U$ be a unipotent algebraic group defined over $\Q$, and let
$\Gamma\leq U(\Q)$ be finitely generated and Zariski dense.
Then for every $x\in U(\Q)$ there is $m\geq1$ with $x^m\in\Gamma$.
Equivalently, $U(\Q)$ is the radicable hull of $\Gamma$.
\end{lemma}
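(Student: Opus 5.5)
The plan is to pass to the Lie algebra via the finite logarithm and exponential of Section~3, and to compare the isolator of $\Gamma$ with the rational points of $U$. Put $\mathfrak u=\Lie(U)(\Q)$. In characteristic zero, $\exp\colon\mathfrak u\to U(\Q)$ and $\log\colon U(\Q)\to\mathfrak u$ are mutually inverse polynomial bijections, and they are defined over $\Q$. Consider the isolator
\[
I(\Gamma)=\{x\in U(\Q):x^m\in\Gamma\text{ for some }m\geq1\}.
\]
The claim is exactly $I(\Gamma)=U(\Q)$. The "equivalently" clause then follows because $U(\Q)$ is uniquely divisible: it is torsion-free, and $\log(u^m)=m\log u$.

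\emph{Step 1: $I(\Gamma)$ is a radicable subgroup.} By P.~Hall's theorem on isolators in nilpotent groups, the isolator of a subgroup of a torsion-free nilpotent group is a subgroup. Here $U(\Q)\leq\UT_n(\Q)$ after conjugation, so it has class at most $n-1$. Radicability is immediate from the definition: for $x\in I(\Gamma)$ with $x^m\in\Gamma$, the unique $q$-th root $y=\exp(\tfrac1q\log x)$ of $x$ satisfies $y^{qm}\in\Gamma$, so $y\in I(\Gamma)$.

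\emph{Step 2: $\mathfrak v=\log I(\Gamma)$ is a $\Q$-Lie subalgebra of $\mathfrak u$.} Closure under $\Q$-scalars is exactly radicability, since $\log(x^{p/q})=\tfrac pq\log x$. For closure under $+$ and $[\,,]$, I would use the Lazard (inverse Baker--Campbell--Hausdorff) formulas. In a group of nilpotency class at most $n-1$, $\exp(X+Y)$ and $\exp([X,Y])$ are finite products of rational powers of iterated group commutators of $\exp X$ and $\exp Y$. Such expressions stay inside any radicable subgroup, which gives the closure. I expect this to be the main obstacle. If a clean citation is not available, I would argue by induction on the class instead. The base case is the abelian one, where $\log$ is a homomorphism. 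For the induction step, pass to $I(\Gamma)Z/Z$ with $Z$ the last nontrivial term of the lower central series. There one uses the identity $\log(xy)=\log x+\log y+\tfrac12[\log x,\log y]+\cdots$, in which the higher terms lie in deeper terms of the series.

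\emph{Step 3: the density argument.} Let $V$ be the $\Q$-defined closed subgroup $\exp(\mathfrak v\otimes_\Q\overline{\Q})$. It is a closed connected unipotent subgroup, because $\mathfrak v$ is a nilpotent Lie subalgebra and $\exp$ is a polynomial isomorphism of varieties onto its image. It contains $\Gamma\subseteq I(\Gamma)$. Zariski density of $\Gamma$ in $U$ forces $V=U$, so $\mathfrak v\otimes\overline{\Q}=\mathfrak u\otimes\overline{\Q}$. Hence $\mathfrak v=\mathfrak u$, by comparing $\Q$-dimensions inside $\mathfrak u$. Therefore $I(\Gamma)=\exp(\mathfrak u)=U(\Q)$, which is the assertion. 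Finite generation of $\Gamma$ is not really used except to match the standard formulation in~\cite{BG}.
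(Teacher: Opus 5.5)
Your argument is correct, but it takes a different route from the paper. The paper gives no proof of this lemma; it simply quotes the standard Mal'cev theory for finitely generated torsion-free nilpotent groups from Baues--Grunewald. That is why it needs Lemma~\ref{lem:dense-fg} to extract a finitely generated Zariski-dense subgroup of $N$ before invoking the lemma in the proof of Corollary~\ref{thm:rational}. You instead prove the statement directly:
\begin{itemize}
\item Hall's theorem makes the isolator $I(\Gamma)$ a subgroup, and it is visibly radicable.
\item Lazard's inverse Baker--Campbell--Hausdorff formulas make $\mathfrak v=\log I(\Gamma)$ a $\Q$-Lie subalgebra of $\mathfrak u$.
\item The closed subgroup $\exp(\mathfrak v\otimes\Qbar)$ contains $\Gamma$ and lies in $U$, since $\mathfrak v\subseteq\mathfrak u$. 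It is closed because it equals $\{g\in U:\log g\in\mathfrak v\otimes\Qbar\}$. Density then forces it to equal $U$, and a dimension count over $\Qbar$ gives $\mathfrak v=\mathfrak u$.
\end{itemize}
Each step checks out.

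Your route buys generality and economy. As you note, finite generation is never used, so Lemma~\ref{lem:dense-fg} becomes superfluous. Moreover, in the paper's actual application one can take $\Gamma=N$ directly, since $N$ is Zariski dense in $U$ and already radicable. Then even Step~1 (Hall's theorem) can be skipped, and Steps~2--3 give $N=U(\Q)$ at once.

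The price is that you lean on two substantial standard inputs: Hall's isolator theorem, and the correspondence between radicable subgroups and $\Q$-Lie subalgebras via rational group words. Both should be cited precisely. Your fallback induction on the nilpotency class in Step~2 is only a sketch and should not replace such a citation.
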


\begin{lemma}\label{lem:dense-fg}
Every Zariski-dense subgroup $D$ of a unipotent algebraic group $U$ in
characteristic zero contains a finitely generated Zariski-dense subgroup.
\end{lemma}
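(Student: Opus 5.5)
We will use a Noetherian ascending-chain argument on Zariski closures of finitely generated subgroups. For each finite subset $S\subseteq D$, let $\langle S\rangle$ be the subgroup it generates and $Z_S=\overline{\langle S\rangle}^{\,\zar}\leq U$. Each $Z_S$ is a closed subgroup of $U$, and $S\subseteq S'$ implies $Z_S\leq Z_{S'}$.

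The first step is to show that each $Z_S$ is connected. We use that $U$ is unipotent in characteristic zero: every closed subgroup of $U$ is connected. By the argument in the proof of Proposition~\ref{prop:unipotent}, every unipotent element of an algebraic group lies in the identity component, since it lies on the one-parameter subgroup $t\mapsto\exp(t\log u)$, whose image is contained in any closed subgroup containing $u$. Every element of $Z_S\leq U$ is unipotent, so $Z_S=Z_S^\circ$. Consequently, $Z_S$ is an irreducible closed subvariety.

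The second step is to choose $S$ with $\dim Z_S$ maximal among finite $S\subseteq D$; this is possible because dimensions are bounded by $\dim U$. For any $g\in D$, put $S'=S\cup\{g\}$. Then $Z_S\leq Z_{S'}$, both are irreducible, and $\dim Z_{S'}\leq\dim Z_S$ by maximality. An irreducible closed subvariety of an irreducible variety of equal dimension must coincide with it, so $Z_{S'}=Z_S$. Hence $g\in Z_S$, which gives $D\subseteq Z_S$. Since $D$ is dense in $U$, we get $U=\overline D\subseteq Z_S$, so $\langle S\rangle$ is a finitely generated Zariski-dense subgroup of $D$. One could avoid the connectedness step entirely by Noetherianity: choose $S$ with $Z_S$ maximal under inclusion among the closures, using the descending chain condition on the complements. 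Even so, I would present the dimension argument, which is cleaner.

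The main obstacle is the connectedness claim, or equivalently the legitimacy of using maximality. If we instead pick $Z_S$ maximal by inclusion, we must justify that a maximal element exists among an upward-directed family of closed sets. Noetherian spaces satisfy the descending chain condition on closed sets, not the ascending one, so that route needs care. The dimension argument together with irreducibility handles it cleanly, which is why I would rely on the unipotent-implies-connected fact from Proposition~\ref{prop:unipotent}.
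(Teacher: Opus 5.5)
Your argument is correct and is essentially the paper's proof: every closed subgroup of $U$ is connected because each element lies on the one-parameter subgroup $t\mapsto\exp(t\log u)$, and so a finitely generated subgroup of $D$ whose Zariski closure has maximal dimension has a closure that contains $D$, hence equals $U$. You should drop the aside claiming connectedness could be avoided by Noetherianity. A descending chain condition on the open complements is the ascending chain condition on closed sets, which Noetherian spaces lack. The unipotence input is genuinely needed: the $2$-power roots of unity form a Zariski-dense subgroup of $\mathbb{G}_m$, yet all of its finitely generated subgroups are finite and hence not dense.
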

\begin{proof}
Every closed algebraic subgroup of $U$ is connected: each of its elements
lies in a connected one-parameter subgroup, as in
Proposition~\ref{prop:unipotent}.
Considering all the finitely generated subgroups $\Lambda=\langle d_1,\ldots,d_r\rangle\leq D.$ Their Zariski Closure
satisfies $0\leq\dim\overline\Lambda^{\,\mathrm{Zar}}\leq\dim U.$ Choose $\Gamma\leq D$ be the finitely generated 
group whose Zariski closure has maximal dimension. For any $x\in D$, the closure of $\langle\Gamma,x\rangle$ contains
$\overline\Gamma^{\,\zar}$ and has the same dimension by maximality.
These are connected algebraic groups, so they are equal. Thus
$D\subseteq\overline\Gamma^{\,\zar}$, and density gives
$\overline\Gamma^{\,\zar}=U$.
\end{proof}

\begin{proof}[Proof of Corollary~\ref{thm:rational}]
The Zariski closure $H$ of the subgroup of rational matrices $G$ is
$\Q$-defined; so is its identity component $U=H^\circ$.
The canonical subgroup $N=G\cap U$ is Zariski dense in $U$.
Choose a finitely generated Zariski-dense subgroup $\Gamma\leq N$ by
Lemma~\ref{lem:dense-fg}. If $x\in U(\Q)$, then
$x^m\in\Gamma\subseteq N$ for some $m$ by Lemma~\ref{lem:malcev}.
Unique divisibility of $N$ supplies $y\in N$ with $y^m=x^m$.
Injectivity of powers on unipotent matrices implies $y=x$.
Therefore $N=U(\Q)$.

Write $G=N\rtimes F$ as in Theorem~\ref{thm:main}.$G=\bigcup_{f\in F}Nf.$
Since $F$ is finite, we have 
\[
\begin{aligned}
H
=\overline G^{\,\mathrm{Zar}}
=\overline{\bigcup_{f\in F}Nf}^{\,\mathrm{Zar}}
=\bigcup_{f\in F}\overline{Nf}^{\,\mathrm{Zar}}.
\end{aligned}
\]
right multi $f$ is algebraic isomorphism and Zariski homeomorphism. Thus
$$\overline{Nf}^{\,\mathrm{Zar}}=\overline N^{\,\mathrm{Zar}}f=Uf.$$

So $$H=\bigcup_{f\in F}Uf$$

Each representative $f$ is a rational matrix. Taking rational points
therefore gives $H(\Q)=U(\Q)F=NF=G$.
\end{proof}

\begin{corollary}\label{cor:uniform}
For $n\geq1$, define
\[
             M_n=\lcm\{r\geq1:\varphi(r)\leq n\}.
\]
If $G\leq\GL_n(\Q)$ and $\Pfam(G)$ is finite, then
\[
 \exp(G/N)\mid M_n,\qquad P_{M_n}(G)=N,\qquad
 P_{m+M_n}(G)=P_m(G)\quad(m\geq1).
\]
\end{corollary}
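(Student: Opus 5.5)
The plan is to show that $e=\exp(G/N)$ divides $M_n$; the other two assertions then follow from Proposition~\ref{prop:profile}. Indeed, once $e\mid M_n$, we get $\gcd(M_n,e)=e$, so \eqref{eq:profile} gives $P_{M_n}(G)=P_e(G)=N$. Moreover $\gcd(m+M_n,e)=\gcd(m,e)$, hence $P_{m+M_n}(G)=P_{\gcd(m,e)}(G)=P_m(G)$. So everything reduces to bounding the orders of elements of $G/N$.

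By Theorem~\ref{thm:main} we may write $G=N\rtimes F$ with $F\cong G/N$ finite. Thus $e=\exp(F)$ is the lcm of the orders of the elements $f\in F$, and it suffices to show that each such order $r$ satisfies $r\mid M_n$.

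Fix $f\in F$ of order $r$. Since $f$ has finite order in $\GL_n(\Q)$, it is diagonalizable over $\C$ and its eigenvalues are roots of unity. The minimal polynomial of $f$ has rational coefficients and divides $X^r-1$, so it is a product of distinct cyclotomic polynomials $\Phi_{r_1},\dots,\Phi_{r_t}$, each with $\varphi(r_i)=\deg\Phi_{r_i}\le n$. Because $f$ is diagonalizable, its order is exactly $r=\lcm(r_1,\dots,r_t)$. Each $r_i$ belongs to the set $\{r':\varphi(r')\le n\}$, so $r\mid M_n$. Taking the lcm over $f\in F$ gives $e\mid M_n$.

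No step looks like a real obstacle. The only point needing care is that the order of $f$ equals the lcm of the orders of its eigenvalues, which uses semisimplicity of finite-order matrices in characteristic zero (via Maschke's theorem, or because the minimal polynomial divides the separable polynomial $X^r-1$). Working over $\Q$ rather than a general $k$ is what ensures each cyclotomic factor has degree at most $n$, with no $D$ factor as in Proposition~\ref{prop:spectrum}.
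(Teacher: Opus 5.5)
Your proposal is correct and follows the paper's proof. You bound the order of each element of the finite complement $F\cong G/N$ through its eigenvalues: the cyclotomic factors have degree at most $n$, and diagonalizability makes the order equal to the $\lcm$ of the eigenvalue orders. You then read off $P_{M_n}(G)=N$ and the period $M_n$ from Proposition~\ref{prop:profile}, spelling out the minimal-polynomial and $\gcd$ bookkeeping that the paper leaves implicit.
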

\begin{proof}
A finite-order rational $n\times n$ matrix is diagonalizable over $\C$.
The order of each eigenvalue $\zeta$ satisfies
$\varphi(\operatorname{ord}(\zeta))\leq n$, so the matrix order divides
$M_n$. Apply this to the finite complement $F$, which is isomorphic to
$G/N$, and use Proposition~\ref{prop:profile}.
\end{proof}

\begin{corollary}\label{cor:finite}
Under the hypotheses of Theorem~\ref{thm:main}, $G$ is residually finite
if and only if it is finite. If $G$ is finitely generated, it is finite.
\end{corollary}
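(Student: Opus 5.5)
The plan is to reduce everything to the canonical subgroup $N$ of Theorem~\ref{thm:main}, using two facts established there: $N=\Rf(G)$, and $N$ is divisible and torsion-free.

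\textbf{Residual finiteness.} Suppose $G$ is residually finite. Then the intersection of its finite-index normal subgroups is trivial, so $\Rf(G)=1$. By \eqref{eq:canonical} this gives $N=1$, and since $G=N\rtimes F$ with $F$ finite, $G\cong F$ is finite. Conversely, every finite group is residually finite, because the trivial subgroup already has finite index. So the first claim is just a rereading of the equality $N=\Rf(G)$.

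\textbf{Finite generation.} Assume $G$ is finitely generated. Since $N$ has finite index in $G$, Schreier's lemma shows that $N$ is finitely generated as well. By Proposition~\ref{prop:unipotent}, $N$ is nilpotent. The goal is to show that a finitely generated nilpotent group that is divisible must be trivial.

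I would argue via the abelianization. The group $N/[N,N]$ is a finitely generated abelian group, and it is divisible as a quotient of the divisible group $N$. A finitely generated abelian group $\Z^r\oplus T$ with $T$ finite is divisible only when $r=0$ and $T=0$: a $\Z$ summand is not divisible, and a nonzero finite group is not divisible. Hence $N=[N,N]$.

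For a nilpotent group, $N=[N,N]$ forces $N=1$. Indeed, if $N$ were nontrivial, its lower central series would strictly descend from $N$ to $1$, so $[N,N]$ would be a proper subgroup of $N$. Therefore $N=1$, and $G\cong F$ is finite.

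An alternative route for this step is the following: a finitely generated nilpotent group is residually finite, so $\Rf(N)=1$. This is incompatible with $N$ being divisible and nontrivial, because a nontrivial divisible group admits no nontrivial finite quotients.

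\textbf{Main obstacle.} The only step needing care is the finite-generation step, specifically passing finite generation from $G$ to $N$. Schreier's lemma handles this, and once it is in place the argument is routine.
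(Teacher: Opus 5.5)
Your proposal is correct and matches the paper's proof: the first claim follows from $\Rf(G)=N$ together with finiteness of $G/N$. The second uses Schreier's lemma, the triviality of a finitely generated divisible abelian group (your abelianization step), and the fact that a perfect nilpotent group is trivial.
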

\begin{proof}
The first conclusion follows from $\Rf(G)=N$ and the finiteness of $G/N$.
For the second, Schreier's theorem makes $N$ finitely generated.
Its abelianization is a finitely generated divisible abelian group,
hence trivial. A perfect nilpotent group is trivial, so $N=1$.
\end{proof}

\section{Virtual abelianness and nilpotent automorphism actions}
\label{sec:nilpotent-actions}

We begin with an abstract observation that identifies the obstruction
to virtual abelianness in Theorem~\ref{thm:main}.

\begin{lemma}\label{lem:no-finite-index}
A divisible group has no proper finite-index subgroup.
If $N\trianglelefteq G$ is divisible and of finite index, then
\[
            G\text{ is virtually abelian}
            \quad\Longleftrightarrow\quad N\text{ is abelian}.
\]
\end{lemma}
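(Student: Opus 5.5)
The first claim I would prove directly from the definition, without passing to normal cores of arbitrary subgroups in a complicated way. Let $D$ be divisible and $K\leq D$ of finite index. The normal core $K_0=\bigcap_{x\in D}xKx^{-1}$ is a normal subgroup of finite index, say $r=[D:K_0]$, so the quotient $D/K_0$ is a finite group whose exponent divides $r$. For $x\in D$ choose $y\in D$ with $y^r=x$. Then
\[
xK_0=(yK_0)^r=K_0,
\]
so $x\in K_0\subseteq K$. Hence $K=D$. This is the same observation already used in the proof of Theorem~\ref{thm:main}: a homomorphism from a divisible group to a finite group is trivial.

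For the equivalence, the direction ``$N$ abelian $\Rightarrow$ $G$ virtually abelian'' is immediate, since $N$ itself has finite index. For the converse, suppose $A\leq G$ is abelian with $[G:A]<\infty$. Then $A\cap N$ has finite index in $N$, because $[N:A\cap N]\leq[G:A]$. Since $N$ is divisible, the first claim gives $A\cap N=N$, so $N\leq A$. Therefore $N$ is abelian.

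There is no real obstacle here. The only point needing care is the passage to a normal finite-index subgroup before invoking exponents, because $K$ itself need not be normal. Taking the core handles this, since a finite-index subgroup has only finitely many conjugates.
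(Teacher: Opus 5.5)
Your proof is correct and is essentially the paper's argument: the normal core $K_0$ you use is exactly the kernel of the coset action $D\to S_s$, which the paper shows is trivial by the same exponent argument. The equivalence is handled identically in both, via $[N:A\cap N]\leq[G:A]$.
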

\begin{proof}
Every homomorphism from a divisible group $D$ to a finite group $Q$ is trivial: Considering the group homomorphism $\varphi:D\to Q$,
let $r=\exp(Q)$ for any $x\in D$. By divisible, there exist a $y\in D$, such that $y^r=x$, then $\varphi(x)=\varphi(y^r)=\varphi(y)^r
=1.$ So $\varphi(D)=\{1\}.$
If $[D:L]=s<\infty,$ considering the set of cosets $D/L=\{xL:x\in D\}$, which has $s$ elements. $D$ acts on it by $d\cdot(xL)=(dx)L.$
We get a group homomorphism $\rho:D\longrightarrow S_s.$ $D$ is divisible and $S_s$ is finite, so $\rho$ is trivial. For each $d\in D$,
$dL=L$. Hence $d\in L $ holds for each $d\in D$, $D=L$, divisible group has no proper finite-index subgroup.

If $N\trianglelefteq G, [G:N]<\infty,$ and $N$ is divisible.
If $N$ is abelian, it's finite abelian subgroup of $G$, so $G$ is virtually abelian. 
Conversely, if $G$ is virtually abelian ,then there exist an abelian subgroup
$D\leq G, [G:D]<\infty.$ Then $[N:N\cap D]\leq [G:D]<\infty$ by the injective $N/(N\cap D)\longrightarrow G/D,
n(N\cap D)\longmapsto nD.$
$N$ is divisible, it has no proper finite index subgroup, so $N\cap D=N,$ $N\leq D$, $N$ is also abelian. 

\end{proof}

The next lemma is the algebraic ingredient. The Lie algebra in its
statement is not assumed to be nilpotent.

\begin{lemma}[A dense-orbit obstruction]\label{lem:dense-obstruction}
Let $K$ be an algebraically closed field of characteristic zero, and let
$\mathfrak l$ be a finite-dimensional Lie algebra over $K$.
Suppose that a connected nilpotent linear algebraic group $B$ acts
rationally on $\mathfrak l$ by Lie algebra automorphisms.
If $\mathfrak l$ is nonabelian, then its rational function field contains
a nonconstant $B$-invariant function.
Consequently, if $B$ has a Zariski-dense orbit in $\mathfrak l$, then
$\mathfrak l$ is abelian.
\end{lemma}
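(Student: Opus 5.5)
The plan is to split the argument according to whether every $\operatorname{ad}x$ is nilpotent. In the nilpotent case we reduce to a two-step nilpotent quotient. There we build an invariant of the shape $\Lambda/(fg)$, a quotient of semi-invariant linear forms whose characters cancel, modelled on the invariant $z/(xy)$ of the torus action on the Heisenberg algebra. Throughout, $B$ is a connected nilpotent group, so $B=T\times U$ with $T$ a torus and $U$ unipotent, and the two factors commute. The final sentence of the lemma follows from the first: a $B$-invariant rational function is constant on each orbit, hence constant on a dense orbit, hence constant.

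\emph{Step 1: the non-Engel case.} Because $B$ acts by Lie algebra automorphisms, $\operatorname{ad}(bx)=b\,(\operatorname{ad}x)\,b^{-1}$ for $b\in B$. Hence the coefficients of the characteristic polynomial of $\operatorname{ad}x$ are $B$-invariant polynomial functions on $\mathfrak l$. Each coefficient is homogeneous of positive degree in $x$. So if some $\operatorname{ad}x$ is not nilpotent, one of these coefficients is a nonconstant invariant and we are done. Otherwise Engel's theorem makes $\mathfrak l$ nilpotent, and nonabelian by hypothesis.

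\emph{Step 2: reduction to two-step nilpotent.} The lower central term $C^3\mathfrak l=[\mathfrak l,[\mathfrak l,\mathfrak l]]$ is characteristic, hence $B$-stable, and $\mathfrak l/C^3\mathfrak l$ is still nonabelian. The quotient map is a $B$-equivariant linear surjection, so pulling back a nonconstant invariant gives a nonconstant invariant on $\mathfrak l$. We may therefore assume $W=[\mathfrak l,\mathfrak l]$ is central. Set $V=\mathfrak l/W$, so the bracket induces a nonzero $B$-equivariant alternating map $\beta\colon V\times V\to W$.

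\emph{Step 3: semi-invariants and the weight relation.} First, by Lie--Kolchin applied to the $B$-stable subspace $W^\perp$-complementary quotient $\mathfrak l^*/W^\perp\cong W^*$, choose a $B$-semi-invariant $\lambda\in W^*$ of character $\chi$ with $\lambda\circ\beta\neq0$. Using $U$-fixed vectors in the $T$-weight space of $\mathfrak l^*$ for the same weight, lift $\lambda$ to a semi-invariant linear form $\Lambda$ on $\mathfrak l$ of character $\chi$. Next, $\omega=\lambda\circ\beta$ is a nonzero alternating form on $V$ satisfying $\omega(bv,bw)=\chi(b)^{\pm1}\omega(v,w)$. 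Its radical $R$ is $B$-stable, and on $V/R$ the form gives an isomorphism $V/R\cong (V/R)^*\otimes\chi$. The $T$-weights on $V/R$ therefore pair off as $\mu$ and $\chi-\mu$, with $\omega$ pairing the corresponding weight spaces nondegenerately. Apply Lie--Kolchin to $U$ on the $T$-weight spaces of $(V/R)^*$. This should give semi-invariant linear forms $f,g$ on $V/R$, pulled back to $\mathfrak l$, of characters $\psi_1,\psi_2$ with $\psi_1\psi_2=\chi$ (or $\chi^{-1}$, matching the convention for $\Lambda$).

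\emph{Step 4: the invariant.} Then $\Lambda/(fg)$ is a $B$-invariant rational function. It is nonconstant because $\Lambda$ depends on the $W$-coordinates while $f,g$ factor through $V$. For the Heisenberg algebra this is exactly $z/(xy)$.

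The main obstacle is Step 3. One must check that the $U$-fixed forms can be chosen in matching weight spaces, so that $\psi_1\psi_2$ really equals $\chi$; the worry is that the $U$-fixed line in the $\mu$-space and the one in the $(\chi-\mu)$-space might not be $\omega$-compatible. One must also check that $\Lambda$ exists, meaning the restriction $\mathfrak l^*\to W^*$ is surjective on $U$-fixed semi-invariants of weight $\chi$. My first attempt would use the $U$-stable flag in each $T$-isotypic component together with the nondegenerate pairing. If lifting $\Lambda$ fails, the fallback is a different ratio of semi-invariants, for instance a $U$-invariant polynomial in the $W$-coordinates of weight $\chi$ obtained by averaging along the flag. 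The fallback still relies on the same principle: the bracket forces a multiplicative relation among the characters of semi-invariants, and any such relation gives a nonconstant invariant ratio.
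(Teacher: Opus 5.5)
You have a genuine gap in Step 3, at the point you flagged. Steps 1 and 2 are correct, and so is the choice of a semi-invariant $\lambda\in W^*$: any nonzero one has $\lambda\circ\beta\ne0$, because the image of $\beta$ spans $W$. What fails is the lift of $\lambda$ to a semi-invariant linear form $\Lambda$ on $\mathfrak l$ with $\Lambda|_W=\lambda$. Your nonconstancy argument in Step 4 rests on this lift, and it need not exist, because fixed vectors of a unipotent group do not lift along equivariant surjections.

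For a counterexample, let $\mathfrak l$ be the Heisenberg algebra with $[x,y]=z$, and let $B=\{\exp(s\operatorname{ad}y):s\in K\}$. This $B$ sends $x\mapsto x-sz$ and fixes $y$ and $z$. Here $T=1$ and $W=Kz$. A linear form $ax^*+by^*+cz^*$ is $B$-invariant only when $c=0$, so no invariant form restricts to $z^*$ on $W$. Your fallback of averaging along a flag is not available either, since unipotent groups admit no invariant averaging. As written, Step 4 is therefore unproved in general.

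The missing observation is that you never need $\Lambda|_W\ne0$. Normalize conventions so that $\lambda(tw)=\chi(t)\lambda(w)$.
\begin{enumerate}
\item Then $\lambda$ is nonzero on a single $T$-weight space $W_\chi\subseteq\mathfrak l_\chi$, so $\mathfrak l_\chi\ne0$.
\item The space $\mathfrak l_\chi$ is $U$-stable because $T$ and $U$ commute, so $U$ fixes a nonzero linear form on it.
\item Composing that form with the projection $\mathfrak l\to\mathfrak l_\chi$ along the other weight spaces gives a nonzero semi-invariant $\Lambda$ of character $\chi$.
\item The same construction produces $f$ and $g$ from arbitrary $U$-fixed forms in the paired weight spaces of weights $\mu$ and $\chi\mu^{-1}$. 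So your $\omega$-compatibility worry is moot: invariance of $\Lambda/(fg)$ depends only on the $T$-weights.
\item Nonconstancy follows from homogeneity. The function $\Lambda/(fg)$ is a nonzero rational function of degree $-1$, so it scales by $c^{-1}$ under $v\mapsto cv$ and cannot be constant.
\end{enumerate}

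With this fix, the Engel reduction, the passage to a two-step quotient and the radical of $\omega$ all become unnecessary, and you arrive at the paper's proof. There, a nonzero bracket $[\mathfrak l_\chi,\mathfrak l_\psi]\subseteq\mathfrak l_{\chi\psi}$ already shows that $\chi$, $\psi$ and $\chi\psi$ are weights of $\mathfrak l$. The function $f_\chi f_\psi/f_{\chi\psi}$, built from fixed forms of the unipotent factor on these weight spaces, is then a nonzero $B$-invariant homogeneous of degree $1$, hence nonconstant.
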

\begin{proof}
A connected nilpotent algebraic group in characteristic zero decomposes as
$$
                         B=T\times W,
$$
where $T$ is a algebraic torus and $W$ is connected and unipotent, the characteristic of algebraic torus
$T$ is $\chi:T\longrightarrow K^\times.$;
see~\cite[Corollary~16.48]{Milne}.
Write the weight-space decomposition for the $T$-action as
\[
 \mathfrak l=\bigoplus_{\chi\in S}\mathfrak l_\chi,\qquad
 \mathfrak l_\chi
 =\{v:t\cdot v=\chi(t)v\text{ for all }t\in T\}.
\]
Here $S$ is the finite set of characters $\chi:T\to K^\times$
that occur in the representation; see~\cite[Chapter~12]{Milne}.
Because $T$ centralizes $W$, every $\mathfrak l_\chi$ is $W$-stable.

A nonzero representation of a unipotent group has a nonzero fixed vector
\cite[Section~14b]{Milne}. Applied to $\mathfrak l_\chi^*=\operatorname{Hom}_K(\mathfrak l_\chi,K)$ this yields
a nonzero $W$-invariant linear form $f_\chi$ on $\mathfrak l_\chi$. Define the action of $W$ on the dual space:
$(w\cdot f)(v)=f(w^{-1}\cdot v).$
If $f_\chi$ is invariant, then $f_\chi(w^{-1}\cdot v)=f_\chi(v).$ is invariant vector.
Changing $w$ to $w^{-1}$, we have $f_\chi(w\cdot v)=f_\chi(v).$
Now $f_\chi$ only defined on  $\mathfrak l_\chi$. For any $v\in\mathfrak l$
has unique decomposition 
$$
v=\sum_{\psi\in S}v_\psi,
\qquad v_\psi\in\mathfrak l_\psi.
$$
Extend $f_\chi$ to the whole $\mathfrak l$, define $f_\chi(v)=f_\chi(v_\chi).$  For $v\in\mathfrak l_\chi$, we have $t\cdot v=\chi(t)v.$ $f_\chi(t\cdot v)=\chi(t)f_\chi(v).$ and since
For \(\mathfrak l\) ,
\begin{equation}\label{eq:semi-invariant}
 f_\chi((tw)\cdot v)=\chi(t)f_\chi(v)
 \qquad(t\in T,\ w\in W).
\end{equation}
This is an evaluation formula; it avoids any convention about the
contragredient action on functions.

Suppose that $\mathfrak l$ is nonabelian. Some $\chi,\psi\in S$ satisfy
$[\mathfrak l_\chi,\mathfrak l_\psi]\ne0$.
Since $T$ preserves the bracket,
$
       [\mathfrak l_\chi,\mathfrak l_\psi]
       \subseteq\mathfrak l_{\chi\psi}.
$

Thus $\theta=\chi\psi$ also belongs to $S$.
On the nonempty open set $f_\theta\ne0$, define
$$
                       R=\frac{f_\chi f_\psi}{f_\theta}.
$$
Equation~\eqref{eq:semi-invariant} and $\theta=\chi\psi$ show that
$R((tw)\cdot v)=R(v)$. Hence $R$ is $B$-invariant.
It is nonzero, since the coordinate ring of a vector space is a domain.
Moreover,
$$
                      R(cv)=cR(v)\qquad(c\in K^\times).
$$
It therefore cannot be constant. This remains valid when some of
$\chi,\psi,\theta$ coincide and factors cancel in the quotient.

Finally, suppose that $B$ has a dense orbit, $\mathcal O=B\cdot v_0,
\overline{\mathcal O}^{\,\mathrm{Zar}}=\mathfrak l.$

We have $R=\frac{f_\chi f_\psi}{f_\theta}.$ Because $\Omega=\{f_\theta\neq0\}$ is non empty open set, 
$\mathcal O$ is dense, $\mathcal O\cap\Omega\neq\varnothing.$ And $\Omega$ is invariant under $B$, since 
$f_\theta((tw)\cdot v)=\theta(t)f_\theta(v),$ where $\theta(t)\neq0$. Hence, $\mathcal O\subseteq\Omega.$
Take $v\in\mathcal O$, let $\lambda=R(v).$ Since $R$ is invariant, for all $b\in B$, $R(b\cdot v)=R(v)=\lambda.$
So $R=\lambda$ on the $\mathcal O$. On $\mathcal O$ , $\frac{f_\chi f_\psi}{f_\theta}=\lambda,$ $F=f_\chi f_\psi-\lambda f_\theta$
is zero on the $\mathcal{O}$, \(\mathcal O\) is Zariski dense, so $F$ should be zero on the $\mathfrak l$:$f_\chi f_\psi=\lambda f_\theta.$
The rational function $R=\lambda$ is contradict with $R$ is non-constant.

\end{proof}

\begin{proposition}\label{prop:lie-actions}
Let $\mathfrak l$ be a finite-dimensional Lie algebra over $\Q$.
If a virtually nilpotent subgroup
$\Gamma\leq\Aut_{\Q}(\mathfrak l)$ has finitely many orbits on
$\mathfrak l$, then $\mathfrak l$ is abelian.
\end{proposition}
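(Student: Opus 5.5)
The plan is to pass from the abstract group $\Gamma$ to a connected nilpotent algebraic group over $K=\Qbar$ and then apply Lemma~\ref{lem:dense-obstruction}.

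\emph{Setting up the algebraic group.} Fix a $\Q$-basis of $\mathfrak l$, so that $\Aut_{\Q}(\mathfrak l)\leq\GL(\mathfrak l)\cong\GL_m(\Q)$, and set $\mathfrak l_K=\mathfrak l\otimes_{\Q}K$. The group $\Aut_K(\mathfrak l_K)$ of Lie automorphisms is Zariski closed in $\GL(\mathfrak l_K)$, since it is cut out by the polynomial equations $g[x,y]=[gx,gy]$. Let $H$ be the Zariski closure of $\Gamma$ in $\GL(\mathfrak l_K)$; then $H\leq\Aut_K(\mathfrak l_K)$. Choose a nilpotent subgroup $\Gamma_1\leq\Gamma$ of finite index. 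Its closure $\overline{\Gamma_1}$ is again nilpotent. This is standard: the iterated commutator map is a morphism, so an identity of the form $[x_1,\dots,x_{c+1}]=1$ passes from a group to its Zariski closure. Moreover $\overline{\Gamma_1}$ has finite index in $H$, because $H$ is the union of finitely many translates of $\overline{\Gamma_1}$. Hence $B:=H^\circ=\overline{\Gamma_1}^{\,\circ}$ is a connected nilpotent linear algebraic group. It acts rationally on $\mathfrak l_K$ by Lie algebra automorphisms, and it is normal in $H$.

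\emph{Producing a dense $B$-orbit.} Since $\Q$ is infinite, $\mathfrak l=\mathfrak l_{\Q}$ is Zariski dense in $\mathfrak l_K$. By hypothesis $\mathfrak l$ is a finite union of $\Gamma$-orbits, and $\mathfrak l_K$ is irreducible. It follows that some orbit $\Gamma v$ with $v\in\mathfrak l$ is Zariski dense in $\mathfrak l_K$. Next, write $H=\bigcup_{i=1}^r h_iB$. Then
\[
\Gamma v\subseteq Hv=\bigcup_{i=1}^r h_iBv=\bigcup_{i=1}^r B(h_iv),
\]
where the last equality uses $h_iBh_i^{-1}=B$. Taking closures and using irreducibility again, some orbit $B(h_iv)$ is Zariski dense in $\mathfrak l_K$.

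\emph{Conclusion.} Lemma~\ref{lem:dense-obstruction} now applies to $B$ acting on $\mathfrak l_K$, so $\mathfrak l_K$ is abelian. Hence $\mathfrak l\subseteq\mathfrak l_K$ is abelian.

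The step needing the most care is the descent from finitely many $\Gamma$-orbits to a dense orbit of the connected group $B$. This involves two points. First, rational points must be dense, which holds over $\Q$. Second, orbits of the finitely many components of $H$ are again $B$-orbits, which relies on the normality of $H^\circ$. The nilpotence of $\overline{\Gamma_1}$ is routine but should be stated explicitly, since Lemma~\ref{lem:dense-obstruction} requires $B$ to be nilpotent.
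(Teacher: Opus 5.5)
Your proof is correct and follows essentially the same route as the paper. Both pass to the Zariski closure over $\Qbar$, get nilpotence from the density of commutator identities, use density of $\mathfrak l$ plus irreducibility to obtain a dense orbit of the identity component, and then invoke Lemma~\ref{lem:dense-obstruction}. The only cosmetic difference is that the paper intersects with $C^\circ$ and uses that finite-index subgroups still have finitely many orbits, whereas you keep all of $\Gamma$ and absorb the component group via $h_iBv=B(h_iv)$, using normality of $H^\circ$.
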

\begin{proof}
The zero-dimensional case is immediate.
Choose a finite-index nilpotent subgroup $\Gamma_0\leq\Gamma$.
An orbit under a group splits into at most its subgroup index many
orbits under a finite-index subgroup. Thus $\Gamma_0$ still has finitely
many orbits on $\mathfrak l$.

Set $V=\mathfrak l\otimes_{\Q}\Qbar$, and let $C$ be the Zariski
closure of $\Gamma_0$ in $\Aut_{\Qbar}(V)$.
The latter is a closed algebraic subgroup of $\GL(V)$: preserving the
bracket is expressed by polynomial equations in matrix entries.
If $\Gamma_0$ has nilpotency class at most $c$, its $(c+1)$-fold
commutator identities extend to $C$ by Zariski density, since word
maps on algebraic groups are regular. Hence $C$ is nilpotent.

Let $B=C^\circ$ and $\Gamma_1=\Gamma_0\cap B$.
The component group is finite, so $[\Gamma_0:\Gamma_1]<\infty$ and
$\Gamma_1$ still has finitely many orbits. Choose representatives
$v_1,\ldots,v_s\in\mathfrak l$. Then
\[
 \mathfrak l=\bigcup_{i=1}^s\Gamma_1\cdot v_i
            \subseteq\bigcup_{i=1}^s B\cdot v_i.
\]
The rational vector space $\mathfrak l$ is Zariski dense in $V$.
Therefore
\[
                  V=\bigcup_{i=1}^s
                     \overline{B\cdot v_i}^{\,\zar}.
\]
Since $V$ is irreducible, at least one $B\cdot v_i$ is dense.
Lemma~\ref{lem:dense-obstruction} implies $[V,V]=0$, hence
$[\mathfrak l,\mathfrak l]=0$.
\end{proof}

\begin{proof}[Proof of Theorem~\ref{thm:abelian-action}]
Since $\omega(G)\leq\omega_A(G)<\infty$, Theorems~\ref{thm:main}
and~\ref{thm:rational} give
\[
          G=N\rtimes F,\qquad N=U(\Q),\qquad U=H^\circ,
\]
where $U$ is a unipotent $\Q$-group and $N$ is characteristic in $G$.
Thus $A$ acts on $N$ with finitely many orbits.

Let $\mathfrak n=\Lie(U)(\Q)$. The exponential and logarithm identify
$N$ with the finite-dimensional rational nilpotent Lie algebra
$\mathfrak n$, equipped with its Baker--Campbell--Hausdorff group law:$
X*Y=\log(\exp X\exp Y)=X+Y+\frac12[X,Y]+\frac1{12}[X,[X,Y]]+\frac1{12}[Y,[Y,X]]+\cdots..$
The Mal'cev correspondence is functorial for abstract homomorphisms
between uniquely divisible nilpotent groups; in particular,
\[
 \alpha\longmapsto\bigl(X\longmapsto
            \log(\alpha(\exp X))\bigr)
\]
is an isomorphism from $\Aut(N)$ to $\Aut_{\Q}(\mathfrak n)$;
see~\cite{Assmann} for the group--Lie algebra correspondence and
\cite[Theorem~14.37]{Milne} for its algebraic-group form.
Thus no assumption of linearity of the original abstract automorphisms
is being imposed.

The image of $A$ in $\Aut_{\Q}(\mathfrak n)$ is virtually nilpotent
and has finitely many orbits. Proposition~\ref{prop:lie-actions}
therefore gives $[\mathfrak n,\mathfrak n]=0$.
The Baker--Campbell--Hausdorff law reduces to addition, and hence
$N\cong(\Q^d,+)$, where $d=\dim_{\Q}\mathfrak n<\infty$.
The existing splitting gives the required description of $G$.

Finally, if $G\leq\GL_n(K)$ for a number field $K$, restriction of
scalars embeds the same abstract group in
$\GL_{n[K:\Q]}(\Q)$. The given abstract action of $A$ and its orbits
are unchanged, so the rational case applies.
\end{proof}

\begin{remark}\label{rem:action-scope}
The essential restriction on the action is finite orbit count for a
virtually nilpotent subgroup of the \emph{abstract} automorphism group.
Finiteness for the full automorphism group alone is insufficient.
The passage from rational orbits to algebraic orbits above establishes
one dense orbit, not a bijection of orbit spaces.
For more general finitely generated fields, the proof does not
immediately apply: a divisible unipotent subgroup can have infinite
dimension as a rational Mal'cev Lie algebra, and its abstract
automorphisms need not be linear over the larger field.
\end{remark}

\section{Examples and limits of the hypotheses}\label{sec:examples}

The additional condition $\omega(G)<\infty$ is not characterized by
Theorem~\ref{thm:main}; the theorem characterizes the strictly weaker
condition $|\Pfam(G)|<\infty$.

\begin{example}\label{ex:examples}
The group
\[
 U(\Q)=\left\{\begin{pmatrix}1&t\\0&1\end{pmatrix}:t\in\Q\right\}
\]
is isomorphic to $(\Q,+)$, is uniquely divisible, and has two automorphism
orbits. All its power images coincide with the group.
For the affine group $G=\Q\rtimes C_2$, where the involution acts by
inversion, the three automorphism orbits are the identity, the nonzero
translations, and the involutions. Here $e=2$ and
\[
               P_m(G)=\begin{cases}G,&m\text{ odd},\\
                                    \Q,&m\text{ even}.
                       \end{cases}
\]
For example, in coordinates $(\varepsilon,t)$ with
$\varepsilon\in\{1,-1\}$, the automorphisms which scale translations by
$c\in\Q^\times$ and send a chosen involution $r$ to $u(d)r$, $d\in\Q$,
act transitively on the last two sets separately.

In contrast, $\UT_6(\Q)$ is uniquely divisible, so it has just one
power-image set, but has infinitely many automorphism orbits by
\cite[Theorem~1.2]{MK}. Thus finiteness of power images is strictly weaker
than finiteness of automorphism orbits, even for rational unipotent groups.
\end{example}

\begin{example}[The rational Heisenberg group]\label{ex:heisenberg}
Let $\mathcal H=\UT_3(\Q)$, with symmetric coordinates
\[
 h(x,y,t)=
 \begin{pmatrix}
  1&x&t+xy/2\\
  0&1&y\\
  0&0&1
 \end{pmatrix}
 \qquad(x,y,t\in\Q).
\]
Writing $v=(x,y)^{\mathsf T}$ and
$\Omega(v,w)=xy'-yx'$ for $w=(x',y')^{\mathsf T}$, we have
\[
 h(v,t)h(w,s)=h(v+w,t+s+\tfrac12\Omega(v,w)).
\]
The group commutator convention is $[a,b]=aba^{-1}b^{-1}$. Thus
\[
 [h(v,t),h(w,s)]=h(0,\Omega(v,w)),\qquad
 Z(\mathcal H)=[\mathcal H,\mathcal H]=\{h(0,t):t\in\Q\}.
\]
In particular, $\mathcal H$ is nonabelian of class two.
Since $h(v,t)^m=h(mv,mt)$, it is uniquely divisible and has no proper
finite-index subgroup by Lemma~\ref{lem:no-finite-index}.
It is therefore not virtually abelian.

For $M\in\GL_2(\Q)$ and $\ell\in\operatorname{Hom}_{\Q}(\Q^2,\Q)$,
$\Phi_{M,\ell}(h(v,t))=h(Mv,(\det M)t+\ell(v))$ is an automorphism, since
$\Omega(Mv,Mw)=\det(M)\Omega(v,w)$.
These maps are transitive on the nonidentity central elements, and also
on the noncentral elements. For the latter assertion, first choose
$M$ with $Mv=w$ for prescribed nonzero $v,w$, and then choose $\ell$
with $\ell(v)=s-(\det M)t$.
The identity and the center are characteristic, so
$\omega(\mathcal H)=3.$
This is the rational case of~\cite[Theorem~3.1]{MK}; the calculation is
included to distinguish full automorphism orbits from the smaller
acting group in the next proposition.
\end{example}

\begin{proposition}[A metabelian action with five orbits]
\label{prop:metabelian}
There is a metabelian subgroup $A\leq\Aut(\mathcal H)$ such that
$\omega_A(\mathcal H)=5$. In particular, the conclusion of
Theorem~\ref{thm:abelian-action} fails if ``virtually nilpotent'' is
replaced by ``metabelian'' or ``soluble''.
\end{proposition}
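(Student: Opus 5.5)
The plan is to exhibit $A$ explicitly inside the family of automorphisms $\Phi_{M,\ell}$ from Example~\ref{ex:heisenberg}, restricting $M$ to the diagonal torus. Concretely, I would let
\[
A=\{\Phi_{M,\ell}: M=\operatorname{diag}(a,b),\ a,b\in\Q^\times,\ \ell\in\operatorname{Hom}_{\Q}(\Q^2,\Q)\}.
\]
The first step is the composition law. A direct substitution gives
\[
\Phi_{M,\ell}\circ\Phi_{M',\ell'}=\Phi_{MM',\,(\det M)\ell'+\ell\circ M'}.
\]
Since diagonal matrices are closed under products and inverses, $A$ is a subgroup of $\Aut(\mathcal H)$.

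The map $\Phi_{M,\ell}\mapsto M$ is a homomorphism from $A$ onto the abelian torus $T\cong(\Q^\times)^2$. Its kernel is $L=\{\Phi_{I,\ell}\}\cong(\Q^2,+)$, which is abelian because $\Phi_{I,\ell}\Phi_{I,\ell'}=\Phi_{I,\ell+\ell'}$. Hence $[A,A]\le L$ is abelian and $A$ is metabelian. One should also record that $A$ is not virtually nilpotent: the torus acts on $L$ through characters such as $\ell\mapsto ab\,\ell\circ M^{-1}$, with nontrivial weights. This is consistent with Theorem~\ref{thm:abelian-action}, but it is not needed for the statement.

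The main step is counting orbits, which I expect to be a short case analysis. The identity is fixed. On the nonidentity central elements we have $\Phi_{M,\ell}(h(0,t))=h(0,ab\,t)$, and $ab$ ranges over all of $\Q^\times$, so these form exactly one orbit.

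For noncentral elements $h(v,t)$ with $v=(x,y)\neq0$, the $v$-component moves under $v\mapsto(ax,by)$. The torus therefore has exactly three orbits on $\Q^2\setminus\{0\}$:
\begin{itemize}
\item the punctured $x$-axis;
\item the punctured $y$-axis;
\item the set $xy\neq0$.
\end{itemize}
On each of these, diagonal scaling is transitive. Once $Mv=w$ is fixed, the central coordinate becomes $(\det M)t+\ell(v)$. Because $v\ne0$, we can choose $\ell$ with $\ell(v)$ arbitrary, so this coordinate is arbitrary and each fibre over a $v$-orbit is a single $A$-orbit. These three orbits are distinct because the $v$-component of $\Phi_{M,\ell}(h(v,t))$ is $Mv$, which stays in the same torus orbit.

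This gives $1+1+3=5$ orbits, so $\omega_A(\mathcal H)=5$. Since $\mathcal H$ is nonabelian and not virtually abelian by Example~\ref{ex:heisenberg} and Lemma~\ref{lem:no-finite-index}, the conclusion of Theorem~\ref{thm:abelian-action} fails for this metabelian, hence soluble, acting group.

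The only point needing care is the composition formula, specifically the term $\ell\circ M'$ and the factor $\det M$. I would check it by evaluating both sides on $h(v,t)$.
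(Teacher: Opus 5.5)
Your proposal is correct and follows the paper's proof essentially step for step. You use the same group $A$ of automorphisms $\Phi_{M,\ell}$ with diagonal $M$ (the paper's $\phi_{a,b,p,q}$), the same composition law and epimorphism onto $(\Q^\times)^2$ with abelian kernel $(\Q^2,+)$, and the same five-orbit case analysis by zero patterns. The paper also proves that $A$ is not virtually nilpotent ``for completeness'', via $[D^r,U_q]=U_{(2^r-1)q}$; as you note, this is not needed for the statement, and it also follows from Theorem~\ref{thm:abelian-action}.
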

\begin{proof}
For $a,b\in\Q^\times$ and $p,q\in\Q$, define
\[
 \phi_{a,b,p,q}(h(x,y,t))
       =h(ax,by,ab\,t+px+qy).
\]
These are the automorphisms $\Phi_{M,\ell}$ above with
$M=\operatorname{diag}(a,b)$ and $\ell(x,y)=px+qy$.
They form a group $A$: composition is given by
\[
 \phi_{a,b,p,q}\circ\phi_{c,d,r,s}
       =\phi_{ac,bd,\,ab r+pc,\,ab s+qd}.
\]
The map $\phi_{a,b,p,q}\mapsto(a,b)$ is an epimorphism onto
$(\Q^\times)^2$, with abelian kernel
\[
                 \{\phi_{1,1,p,q}:p,q\in\Q\}\cong(\Q^2,+).
\]
Thus $A$ is metabelian.

The five invariant subsets and representatives are
\[
\begin{array}{c|c}
 \text{condition}&\text{representative}\\ \hline
 x=y=t=0 & h(0,0,0)\\
 x=y=0,\ t\ne0 & h(0,0,1)\\
 x\ne0,\ y=0 & h(1,0,0)\\
 x=0,\ y\ne0 & h(0,1,0)\\
 x\ne0,\ y\ne0 & h(1,1,0)
\end{array}
\]
Each subset is one orbit. When $(x,y)\ne(0,0)$, choose $a,b$ to
normalize the nonzero coordinates and then choose $p,q$ to make
$ab\,t+px+qy=0$. When $x=y=0$ and $t\ne0$, choose $ab=t^{-1}$.
The zero patterns cannot change, so these five orbits are distinct.
Example~\ref{ex:heisenberg} shows that $\mathcal H$ is not virtually
abelian, proving the asserted failure.

For completeness, $A$ is not virtually nilpotent. Let
$$
 D=\phi_{2,1,0,0},\qquad U_q=\phi_{1,1,0,q}.
$$
Then $DU_qD^{-1}=U_{2q}$.
Any finite-index subgroup $A_0\leq A$ contains
$V=\{U_q:q\in\Q\}$, because $V$ is divisible, and it contains
$D^r$ for some positive integer $r$.
Consequently
\[
                  [D^r,U_q]=U_{(2^r-1)q}.
\]
Since multiplication by $2^r-1$ is surjective on $\Q$, every term
of the lower central series of $A_0$ contains $V$.
Thus $A_0$ cannot be nilpotent.
\end{proof}

\begin{corollary}\label{cor:dimension}
If $G\leq\GL_2(\Q)$ and $\Pfam(G)$ is finite, then $G$ is virtually
abelian. For every $n\geq3$, there is a subgroup of $\GL_n(\Q)$ with
three automorphism orbits that is not virtually abelian.
\end{corollary}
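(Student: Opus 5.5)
For the first assertion I would apply Theorem~\ref{thm:main} and Corollary~\ref{thm:rational} to $G\leq\GL_2(\Q)$. They give $G=N\rtimes F$ with $F$ finite and $N=U(\Q)$, where $U=H^\circ$ is a connected unipotent $\Q$-subgroup of $\GL_2$. By Proposition~\ref{prop:unipotent}, $N$ is nilpotent of class at most $n-1=1$, so $N$ is abelian. Concretely, $U$ is conjugate into $\UT_2$, which is isomorphic to $\mathbb G_a$. Lemma~\ref{lem:no-finite-index} then shows that $G$ is virtually abelian, since $N$ is divisible and of finite index. This part is immediate and needs no new idea.

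For the second assertion I would use the rational Heisenberg group $\mathcal H=\UT_3(\Q)$ of Example~\ref{ex:heisenberg}. There it is shown that $\omega(\mathcal H)=3$ and that $\mathcal H$ is not virtually abelian. To reach $\GL_n(\Q)$ for $n\geq3$, I would embed $\mathcal H$ block-diagonally as $g\mapsto\operatorname{diag}(g,I_{n-3})$. This is an injective homomorphism. Both the automorphism-orbit count and virtual abelianness are properties of the abstract group, so the image is a subgroup of $\GL_n(\Q)$ with exactly three automorphism orbits that is not virtually abelian.

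The only point needing care is the claim $\omega(\mathcal H)=3$, which the paper already establishes in Example~\ref{ex:heisenberg}. Orbits are separated by the characteristic subsets $\{1\}$, $Z(\mathcal H)\setminus\{1\}$ and $\mathcal H\setminus Z(\mathcal H)$. Transitivity on each of these comes from the maps $\Phi_{M,\ell}$. Every remaining step is formal, so I expect no genuine obstacle. The one thing to keep straight is that the dimension bound $\cl(N)\leq n-1$ is exactly what forces abelianness when $n=2$, and that the bound becomes sharp in the example at $n=3$.
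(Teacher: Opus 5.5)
Your proposal is correct and follows the paper's proof: for $n=2$ the class bound $\cl(N)\leq n-1$ from Theorem~\ref{thm:main} forces $N$ to be abelian, and for $n\geq3$ you use the block embedding of $\mathcal H=\UT_3(\Q)$ together with Example~\ref{ex:heisenberg}. Invoking Corollary~\ref{thm:rational} and Lemma~\ref{lem:no-finite-index} is harmless but unnecessary, since an abelian normal subgroup $N$ of finite index already makes $G$ virtually abelian.
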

\begin{proof}
Theorem~\ref{thm:main} gives $\cl(N)\leq1$ in dimension two.
For $n\geq3$, use $\mathcal H$ and the block embedding
$g\mapsto\operatorname{diag}(g,I_{n-3})$, with the identity block
omitted for $n=3$.
\end{proof}

\begin{remark}[Limits of the field statements]\label{rem:limits}
The equality $G=H(k)$ cannot replace Theorem~\ref{thm:rational} over a
general finitely generated field. Embed the group $U(\Q)$ from
Example~\ref{ex:examples} in $\GL_2(\Q(\sqrt2))$. Its Zariski closure is
the full one-dimensional unipotent group, whose $\Q(\sqrt2)$-points are
strictly larger than $U(\Q)$.

The finite-generation hypothesis on the field cannot be discarded from
the forward implication of Theorem~\ref{thm:main}: every invertible complex
matrix has an $m$-th root for every $m\geq1$, so
$P_m(\GL_2(\C))=\GL_2(\C)$ for every $m$. Nevertheless
$\GL_2(\C)$ is not virtually soluble (its connected Zariski closure is
not soluble). This example concerns the \emph{power-image} hypothesis;
it makes no claim about finite automorphism-orbit count for this group.
\end{remark}

\section*{Acknowledgements}

I am deeply grateful to my supervisor, Professor Hou Yong,
for his continued support, many valuable discussions, and
guidance throughout my studies.
His suggestion to investigate virtual abelianness inspired
Theorem~B.
He also encouraged me to look beyond the resolution of an
individual problem and seek a deeper understanding of the
underlying structures; this encouragement led me to develop
the present work beyond an affirmative answer to Problem~21.40
of the Kourovka Notebook.

This paper is dedicated to the memory of my grandfather, Feng Bengui.

\section*{Declaration of AI Tool Use}
The main mathematical work presented in this paper was completed in July 2026.In April, Gemini 3 Pro suggested the example presented in Example 5.1. After the manuscript was completed, ChatGPT was used to assist in verifying references and related literature. AI tools were also employed to review the details of the proofs and to check grammar.

The main mathematical results, arguments, and exposition are the author's own work. The entire manuscript was written and typeset by the author, rather than generated by AI. All AI-assisted suggestions were independently examined, and the author takes full responsibility for the mathematical correctness, originality, and final content of this paper.

\end{document}